 \newtheorem{thm}{Theorem}[section]
 \newtheorem{cor}[thm]{Corollary}
 \newtheorem{lem}[thm]{Lemma}
 \newtheorem{prop}[thm]{Proposition}
 \theoremstyle{definition}
 \newtheorem{defn}[thm]{Definition}
 \theoremstyle{remark}
 \newtheorem{rem}[thm]{Remark}
 \numberwithin{equation}{section}
\begin{document}
%
\title[Non linear quasi-elliptic equations]
 {Microlocal regularity of Besov type
 \linebreak
for solutions to quasi-elliptic non linear\linebreak
 partial differential equations}
\author[G. Garello]{Gianluca Garello}

\address{%
Department of Mathematics\\
University of Torino\\
Via Carlo Alberto 10
\\
I- 10123 Torino, Italy}

\email{gianluca.garello@unito.it}
\author[A. Morando]{Alessandro Morando}
\address{DICATAM
\\ University of Brescia\\
Via Valotti 9\\
I-25133 Brescia,
Italy}
\email{ alessandro.morando@unibs.it}


\begin{abstract}
Using a standard linearization technique and previously obtained microlocal properties for pseudodifferential operators with smooth coefficients, the authors state results of microlocal regularity in generalized Besov spaces for solutions to non linear PDE.
\end{abstract}

\maketitle


\section{Introduction}\label{INT}
In previous papers, \cite{GM1}, \cite{GM2}, \cite{GM3}, \cite{GM3A}, the authors studied the problem of $L^p$ and Besov continuity and local regularity for pseudodifferential operators with smooth and non smooth symbols, whose derivatives decay at infinity in non homogeneous way. Particularly in  \cite{GM3}, \cite{GM3A} emphasis is given on symbols with quasi-homogeneous decay; in \cite{GM4} also microlocal properties were  studied.\\
Pseudodifferential operators whose smooth symbols have a quasi-homo\-ge\-neo\-us decay at infinity were first introduced in 1977 in Lascar \cite{LA1}, where their microlocal properties in the $L^2-$framework were studied.\\
Symbol classes of quasi-homogeneous type and several related problems have been developed in the meantime, see e.g. Seg\`ala \cite{SE1} for the local solvability, Garello \cite{GA1} for symbols with decay of type $(1,1)$, Yamazaki \cite{YA1} where  non smooth symbols in the $L^p-$framework are introduced and studied under suitable restrictive conditions on the Fourier transform of the symbols themselves.\\
The aim of the present paper is to apply the previous results to the study of microlocal properties  of fully non linear equations, by means of the linearization techniques introduced by M. Beals and M.C. Reeds in \cite{BR1} and well described in \cite{Ta1}, \cite{G1}.  Namely,  consider the non linear equation
\begin{equation}\label{eqINT1}
F(x,\partial^\alpha u)_{\alpha\in \mathcal I}=0,
\end{equation}
where $F(x,\zeta)\in C^\infty(\mathbb R^n\times \mathbb C^N)$ for suitable positive integer $N$, and  $\mathcal I$ is a bounded subset of multi-indices in $\mathbb Z_+^n$. After the linearization obtained by differentiating with respect to the $x_j$ variable:
\begin{equation}\label{eqINT2}
\sum_{\alpha\in\mathcal I}\frac{\partial F}{\partial \zeta^\alpha}(x, \partial^\beta u)_{_{\beta \in\mathcal I}}\partial^\alpha\partial_{x_j}u=-\frac{\partial F}{\partial_{x_j}}(x,\partial^\beta u)_{_{\beta \in\mathcal I}},
\end{equation}
we reduce the study of \eqref{eqINT1} to the following linear equation
\begin{equation}\label{eqINT3}
\sum_{\alpha\in\mathcal I}a_\alpha(x)\partial^\alpha u_j=f_j(x), \quad u_j=\partial_{x_j}u,
\end{equation}
where the coefficients $a_\alpha(x)$ and the forcing term $f_j(x)$ are clearly non smooth, but their regularity depends on $u$ itself. Precisely here we are considering the regularity of solutions to \eqref {eqINT1} in the framework of quasi-homogeneous Besov spaces $B^{s,M}_{\infty, \infty}$, which are introduced in Section \ref{BS}, by means of a suitable decomposition of $\mathbb R^n$ in anisotropic dyadic crowns.\\
In \S\ref{qomsymbols}  pseudodifferential operators and symbol classes are defined and in \S\ref{mcl} we introduce  the microlocal properties of Besov type $B^{s,M}_{\infty,\infty}$ for pseudodifferential operators with smooth symbols, obtained in \cite{GM4}. Such results apply in \S \ref{appl} to the study of the microlocal regularity for solution to equations of type \eqref{eqINT3}, with coefficients of Besov type and, in the last section, to quasi-linear and fully non linear equations.

\section{Quasi-homogeneous Besov spaces}\label{BS}
In the following  $ M=(m_1,\dots, m_n)$ is a weight vector with positive integer components, such that $\min\limits_{1\le j\le n}m_j=1$ and
\begin{equation}\label{QOWF}
\vert\xi\vert_M:=\left( \sum_{j=1}^{n}\xi_j ^{2m_j}\right)^{\frac12}, \quad \xi\in \mathbb R^n
\end{equation}
is called {\it quasi-homogeneous weight function on} $\mathbb R^n$.

We set  $m^\ast:=\max\limits_{1\le j\le n}m_j$, $\frac 1M:=\left( \frac{1}{m_1},\dots, \frac{1}{m_n}\right)$, $\alpha\cdot\frac1M=\sum_{j=1}^n \frac{\alpha_j}{m_j}$ and $\langle \xi\rangle^2_M:= (1+\vert \xi\vert_M ^2)$. Clearly the usual euclidean norm $\vert\xi\vert$ corresponds to the quasi-homogeneous weight in the case $M=(1,\dots,1)$.
\newline
By easy computations, see e.g. \cite{GM3} we obtain the following
\begin{prop}\label{WPROP}
For any weight vector $M$ there exists  a suitable positive constant $C$ such that
\begin{itemize}
\item[i)]
$\frac 1 C\langle \xi\rangle\le \langle\xi\rangle_M\le C\langle\xi\rangle^{m^\ast}, \quad \xi\in\mathbb R^n$,
\item[]
\item[ii)]
$\vert\xi+\eta\vert_M\leq C(\vert \xi\vert_M+\vert \eta \vert_M), \quad \xi,\eta\in\mathbb R^n$;
\item[]
\item [iii)](quasi-homogeneity) for any $t>0$, $\vert t^{ 1/M}\xi\vert_M =t\vert\xi\vert_M$,
\newline
where $t^{1/M}\xi=(t^{1/m_1}\xi_1,\dots,t^{1/m_n}\xi_n)$;
\item[]
\item[iv)]
$\xi^\gamma\partial^{\alpha+\gamma}\vert\xi\vert_M\leq C_{\alpha,\gamma} \langle\xi\rangle_M^{1-\alpha\cdot \frac1M}$,\,\, for any\,\, $\alpha,\gamma\in\mathbb{Z}_+^n\,and\,\,\xi\neq 0$.
\end{itemize}
\end{prop}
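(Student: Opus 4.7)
The plan is to verify the four items in order. Items (i)--(iii) reduce to elementary manipulations of the polynomial $|\xi|_M^2=\sum_{j=1}^n\xi_j^{2m_j}$, while (iv) requires the full strength of quasi-homogeneity established in (iii).

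For (i) I would exploit $1\le m_j\le m^*$ componentwise. Since $\xi_j^{2m_j}\le\langle\xi\rangle^{2m^*}$ for every $j$, summing gives $\langle\xi\rangle_M^2\le 1+n\langle\xi\rangle^{2m^*}$, yielding the right inequality. The trivial bound $\xi_j^2\le 1+\xi_j^{2m_j}$, obtained by splitting $|\xi_j|\le 1$ and $|\xi_j|\ge 1$, gives $\langle\xi\rangle^2\le (n+1)\langle\xi\rangle_M^2$ and hence the left inequality. For (ii) I would use the convexity estimate $(a+b)^{2m_j}\le 2^{2m_j-1}(a^{2m_j}+b^{2m_j})$ coordinatewise and sum, obtaining $|\xi+\eta|_M^2\le 2^{2m^*-1}(|\xi|_M^2+|\eta|_M^2)$, which yields the claim upon taking square roots. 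Item (iii) is immediate from \eqref{QOWF} once one notes that $(t^{1/m_j}\xi_j)^{2m_j}=t^2\xi_j^{2m_j}$ for each $j$.

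The heart of the proposition is (iv). Since $|\xi|_M^2$ is a polynomial that is strictly positive off the origin, $|\xi|_M$ is of class $C^\infty$ on $\mathbb R^n\setminus\{0\}$. By differentiating the identity $|t^{1/M}\xi|_M=t|\xi|_M$ from (iii) inductively in $\xi$, one sees that $\partial^{\alpha+\gamma}|\xi|_M$ is quasi-homogeneous of degree $1-(\alpha+\gamma)\cdot 1/M$, while $\xi^\gamma$ is quasi-homogeneous of degree $\gamma\cdot 1/M$; hence the product $\xi^\gamma\partial^{\alpha+\gamma}|\xi|_M$ is quasi-homogeneous of degree $1-\alpha\cdot 1/M$. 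On the compact quasi-sphere $\Sigma=\{\omega\in\mathbb R^n:|\omega|_M=1\}$ this smooth product attains a finite maximum, say $C_{\alpha,\gamma}$. For arbitrary $\xi\ne 0$, setting $s=|\xi|_M$ and $\omega=s^{-1/M}\xi\in\Sigma$ and invoking the homogeneity degree yields
\[
\bigl|\xi^\gamma\partial^{\alpha+\gamma}|\xi|_M\bigr|\le C_{\alpha,\gamma}\,|\xi|_M^{1-\alpha\cdot 1/M}.
\]
One then passes from $|\xi|_M$ to $\langle\xi\rangle_M$ on the right by combining $|\xi|_M\le\langle\xi\rangle_M$ from (i) with the observation that the prefactor $\xi^\gamma$ compensates any potential blow-up of $\partial^{\alpha+\gamma}|\xi|_M$ near the origin.

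The step I expect to be most delicate is this very last comparison: when $\alpha\cdot 1/M>1$ the pure quasi-homogeneous bound $|\xi|_M^{1-\alpha\cdot 1/M}$ diverges at $\xi=0$, and the replacement by the regularised quantity $\langle\xi\rangle_M^{1-\alpha\cdot 1/M}$ requires a careful accounting of how the multiplicative factor $\xi^\gamma$ exactly neutralises the singularity coming from $\partial^{\alpha+\gamma}|\xi|_M$.
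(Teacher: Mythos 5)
Items i)--iii) of your proposal are correct: the bounds $\xi_j^2\le 1+\xi_j^{2m_j}$ and $\xi_j^{2m_j}\le\langle\xi\rangle^{2m^*}$ give i), the coordinatewise convexity inequality gives ii), and iii) is a direct substitution. (The paper itself only cites \cite{GM3} for "easy computations", so there is no in-text proof to compare with.) For iv), your main device --- smoothness of $\vert\xi\vert_M$ off the origin, quasi-homogeneity of degree $1-(\alpha+\gamma)\cdot\frac1M$ of $\partial^{\alpha+\gamma}\vert\xi\vert_M$ obtained by differentiating the identity in iii), and compactness of the quasi-sphere $\{\vert\omega\vert_M=1\}$ --- is the standard argument and correctly yields
\[
\bigl|\xi^\gamma\partial^{\alpha+\gamma}\vert\xi\vert_M\bigr|\le C_{\alpha,\gamma}\,\vert\xi\vert_M^{1-\alpha\cdot\frac1M},\qquad\xi\ne0.
\]

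The final step is a genuine gap, and it cannot be closed in the form you anticipate. You defer to "a careful accounting of how the multiplicative factor $\xi^\gamma$ exactly neutralises the singularity", but no such accounting exists: the inequality with $\langle\xi\rangle_M^{1-\alpha\cdot\frac1M}$ on the right fails near the origin whenever $\alpha\cdot\frac1M>1$ and $M$ is anisotropic. Take $n=2$, $M=(1,2)$, $\alpha=(2,0)$, $\gamma=0$: then $\partial_1^2\vert\xi\vert_M=\xi_2^4\vert\xi\vert_M^{-3}$, which at $\xi=(0,\varepsilon)$ equals $\varepsilon^{-2}\to\infty$, whereas $\langle\xi\rangle_M^{1-\alpha\cdot\frac1M}=\langle\xi\rangle_M^{-1}\to1$. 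A nonzero $\gamma$ does not repair this: with $\gamma=(0,1)$ one computes $\xi_2\,\partial_2\partial_1^2\vert\xi\vert_M=-2\varepsilon^{-2}$ on the same ray. What your compactness argument actually proves is the displayed homogeneous estimate, which transfers to $\langle\xi\rangle_M^{1-\alpha\cdot\frac1M}$ only on the region $\vert\xi\vert_M\ge1$, where $\vert\xi\vert_M\le\langle\xi\rangle_M\le\sqrt2\,\vert\xi\vert_M$. If a globally valid estimate against $\langle\xi\rangle_M$ is what is needed (e.g.\ to conclude $\langle\xi\rangle_M^m\in S^m_M$, as the paper uses right after Definition \ref{def:1}), you must differentiate the everywhere-smooth weight $\langle\xi\rangle_M=(1+\sum_j\xi_j^{2m_j})^{1/2}$ itself: near the origin all its derivatives are bounded and the right-hand side is bounded below, while for $\vert\xi\vert_M\ge1$ one reduces to the homogeneous estimate. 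As written, your argument establishes the homogeneous version of iv), not the stated one, and the last sentence of your proof should be replaced by one of these two corrections.
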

For $t>0$, $h\geq-1$ integer, we introduce the  notations: $t^{\frac h{\vert M\vert}}=t^\frac h{m_1}\dots t^\frac h{m_n}$ and $t^{\frac hM}\xi=\left(t^{\frac{h}{m_1}}\xi_1, \dots, t^{\frac{h}{m_n}}\xi_n\right)$.\\
In the following $\hat u(\xi)=\mathcal F u(\xi)=\int e^{- i x\cdot\xi}u(x)\, dx\,$ stands for both the Fourier transform of $u\in\mathcal S(\mathbb R^n)$ and its extension to $\mathcal S'(\mathbb R^n)$.
\begin{prop}\label{QBI}
Consider $u\in L^\infty(\mathbb R^n)$, $R>0$,  such that $\textup{supp}\,\hat u\subset B^M_R:=\left\{ \xi\in\mathbb R^n\, ;\, \vert \xi\vert_M\leq R\right\}$. Then for any $\alpha\in\mathbb Z^n_+$ there exists $c_\alpha>0$ independent of $R$ such that
\begin{equation}\label{eqQBI1}
\Vert \partial^\alpha u\Vert_{L^\infty}\leq c_\alpha R^{\alpha\cdot \frac1M}\Vert u\Vert_{L^\infty}.
\end{equation}
\end{prop}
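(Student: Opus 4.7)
The strategy is a quasi-homogeneous analogue of the classical Bernstein inequality, using an anisotropic rescaling of a fixed cutoff together with Young's convolution inequality.

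First, I would fix once and for all a function $\chi \in C^\infty_c(\mathbb R^n)$ with $\chi\equiv 1$ on $B^M_1$ (for example $\chi(\eta)=\tilde\chi(|\eta|_M)$ for a suitable $\tilde\chi$). For $R>0$, set $\chi_R(\xi):=\chi(R^{-1/M}\xi)$, where $R^{-1/M}\xi=(R^{-1/m_1}\xi_1,\dots,R^{-1/m_n}\xi_n)$. By the quasi-homogeneity property (iii) of Proposition~\ref{WPROP}, $|R^{-1/M}\xi|_M=R^{-1}|\xi|_M$, so $\chi_R\equiv 1$ on $B^M_R$. Since $\mathrm{supp}\,\hat u\subset B^M_R$, we have $\hat u=\chi_R\hat u$, hence
\begin{equation*}
u=K_R\ast u,\qquad K_R:=\mathcal F^{-1}\chi_R.
\end{equation*}

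Next, I would carry out the anisotropic change of variables $\eta=R^{-1/M}\xi$ in $K_R$. The Jacobian is $R^{1/m_1+\cdots+1/m_n}$, so
\begin{equation*}
K_R(x)=R^{1/m_1+\cdots+1/m_n}\,\check\chi(R^{1/M}x),
\end{equation*}
with $\check\chi=\mathcal F^{-1}\chi\in\mathcal S(\mathbb R^n)$. Differentiating, each factor $\partial_{x_j}$ brings down a factor $R^{1/m_j}$, so
\begin{equation*}
\partial^\alpha K_R(x)=R^{1/m_1+\cdots+1/m_n}\,R^{\alpha\cdot\frac1M}\,(\partial^\alpha\check\chi)(R^{1/M}x).
\end{equation*}
Undoing the change of variables in the $L^1$ integral cancels the Jacobian factor, yielding
\begin{equation*}
\|\partial^\alpha K_R\|_{L^1}=R^{\alpha\cdot\frac1M}\,\|\partial^\alpha\check\chi\|_{L^1}.
\end{equation*}

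Finally, since $u\in L^\infty$ and $K_R\in L^1$, the convolution $K_R\ast u$ can be differentiated under the integral sign, giving $\partial^\alpha u=(\partial^\alpha K_R)\ast u$, and Young's inequality produces
\begin{equation*}
\|\partial^\alpha u\|_{L^\infty}\le\|\partial^\alpha K_R\|_{L^1}\|u\|_{L^\infty}=c_\alpha R^{\alpha\cdot\frac1M}\|u\|_{L^\infty},
\end{equation*}
with $c_\alpha:=\|\partial^\alpha\check\chi\|_{L^1}$ depending only on $\alpha$ and the fixed cutoff $\chi$.

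The only point requiring a bit of care, and the one I would treat as the main obstacle, is the exact bookkeeping of the anisotropic dilation: the factor $R^{\sum 1/m_j}$ arising from the Jacobian of $\xi\mapsto R^{-1/M}\xi$ must match precisely the Jacobian produced by the change of variables in $\|\partial^\alpha K_R\|_{L^1}$, so that only the differentiation contribution $R^{\alpha\cdot\frac1M}$ survives; all the rest is essentially a routine Bernstein argument.
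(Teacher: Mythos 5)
Your proof is correct and follows essentially the same route as the paper: both fix a cutoff equal to $1$ on $B^M_1$, rescale it anisotropically to $\chi(R^{-1/M}\xi)$, write $u$ as a convolution with its inverse Fourier transform, and conclude by differentiating the kernel and applying Young's inequality, with the Jacobian factor $R^{1/m_1+\cdots+1/m_n}$ cancelling exactly as you describe. No gaps.
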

\begin{proof}
Consider $\phi\in C^\infty(\mathbb R^n)$ such that $\textup{supp}\, \phi \subset  B^M_2$, $\phi(x)=1$ in $B^M_1$ and set $\phi_R(\xi)= \phi\left( R^{-\frac1M}\xi\right)$. Since $\phi_R(\xi)=1$ in $B^M_R$, we  obtain $\hat u(\xi)=\phi_R(\xi)\hat u(\xi)$. Thus
\begin{equation}\label{eqQBI2}
\begin{array}{ll}
\displaystyle u&=\mathcal F^{-1}\left( \phi_R\hat u\right)=\mathcal F^{-1}\phi_R\ast u=\\
&\\
& \displaystyle=(2\pi)^{-n}R^{\frac1{|M|}}\left(\int e^{i \left(R^{\frac1{M}}\cdot\right)\cdot \eta}\phi(\eta)\,d\eta \ast u\right)\\
&\\
& \displaystyle=(2\pi)^{-n}R^{\frac 1{\vert M\vert}}\left(\hat \phi\left(-R^{\frac 1M}\cdot\right)\ast u\right)\in C^\infty(\mathbb R^n)\,,
\end{array}
\end{equation}
where $\mathcal F^{-1}$ denotes the inverse Fourier transform. Moreover
\begin{equation}\label{eqQBI3}
\partial ^\alpha u=(2\pi)^{-n}(-1)^{\vert \alpha\vert}R^{\frac {1}{\vert M\vert}}R^{\alpha\cdot \frac 1M}(\partial^\alpha \hat \phi)\left(-R^{\frac1M}\cdot\right)\ast u\,.
\end{equation}
Then
\begin{equation}\label{eqQBI4}
\begin{array}{ll}
\displaystyle\Vert \partial^\alpha u\Vert_{L^\infty}&\leq(2\pi)^{-n}R^{\frac{1}{\vert M\vert}}\Vert (\partial^\alpha\hat \phi)\left(-R^{\frac 1M}\cdot\right)\Vert_{L^1}\,R^{\alpha\cdot\frac 1M}\Vert u\Vert_{L^\infty}\\
\\
& \displaystyle = (2\pi)^{-n}R^{\frac{1}{\vert M\vert}}\int\left|(\partial^\alpha\hat\phi)\left(-R^{\frac 1M}\xi\right)\right|d\xi\, R^{\alpha\cdot\frac 1M}\Vert u\Vert_{L^\infty}\\
\\
& \displaystyle = (2\pi)^{-n}\int\left|\partial^\alpha\hat\phi(\eta)\right|d\eta\, R^{\alpha\cdot\frac1M}\Vert u\Vert_{L^\infty}=c_\alpha R^{\alpha\cdot\frac 1M}\Vert u\Vert _{L^\infty}.
\end{array}
\end{equation}
\end{proof}

\begin{prop}[Quasi-homogeneous dyadic decomposition]\label{DYAD}
For some $K>1$ let us consider  the cut-off function $\phi(t)\in C^\infty_0\left([0,+\infty]\right)$ such that $0\leq \phi(t)\leq 1$, $\phi(t)=1$ for $0\leq t\leq \frac{1}{2K}$, $\phi(t)=0$, when $t>K$. Set now $\varphi_0(\xi)=\phi\left(\left\vert 2^{-1/M}\xi\right\vert_M\right)-\phi(\vert\xi\vert_M)$ and
\begin{equation}\label{SC2BIS}
\varphi_{-1}(\xi) =\phi\left(\vert \xi\vert_M\right), \quad \varphi_h(\xi)=\varphi_0\left(2^{-h/M}\xi\right)\,\,\mbox{for}\,\,h=0,1,\dots\,.
\end{equation}
Then for any $\alpha,\gamma\in \mathbb Z^n_+$ a positive constant $C_{\alpha,\gamma,K}$ exists such that:
\begin{eqnarray}
&&\begin{array}{l}
\text{supp}\, \varphi_{-1}\subset C_{-1}^{K,M}:= B^M_K\\
\\
\text{supp}\, \varphi_h\subset C_h^{K,M}:=\left\{\xi\in\mathbb{R}^n\,\, ; \,\,\frac1K 2^{h-1}\leq\vert\xi\vert_M\leq K2^{h+1}\right\},\,\,h\geq	0;
\end{array}
\label{SC4BIS}\\
&&\sum\limits_{h=-1}\limits^\infty\varphi_h(\xi)=1\,,\,\,\text{for all}\,\,\xi\in\mathbb R^n; \label{SC5}\\
&&\left\vert \xi^\gamma\partial^{\alpha+\gamma}\varphi_h(\xi)\right\vert \leq  C_{\alpha,\gamma, K}2^{-\left(\alpha\cdot \frac1M\right)h},\quad \xi\in\mathbb R^n\,,\,\,h=-1,0,\dots\,.
\label{SC6}
\end{eqnarray}
Moreover for any fixed $\xi\in\mathbb{R}^n$ the sum in \eqref{SC5} reduces to a finite number of terms, independent of the choice of $\xi$ itself. \\
Setting now for every $u\in\mathcal S'(\mathbb R^n)$
\begin{equation}\label{UP}
u_h=\varphi_h(D)u:=\mathcal F^{-1} \left(\varphi_h\hat u\right),
\end{equation}
we obtain:
\begin{equation}\label{SC5BIS}
\sum\limits_{h=-1}\limits^\infty u_h=u, \quad\text{with convergence in }\,\,\mathcal S'(\mathbb R^n);
\end{equation}
and, for every integer $k\geq 0$ there exists $C_k>0$ such that 
\begin{equation}\label{SC5TER}
 \frac{1}{C_k}2^{hk}\Vert u_h\Vert_{L^\infty}\leq \sum_{\alpha\cdot\frac1{M}=k}\Vert \partial ^\alpha u_h\Vert_{L^\infty}\leq C_k 2^{hk}\Vert u_h\Vert_{L^\infty}, \quad  h=0,1\dots.
\end{equation}
\end{prop}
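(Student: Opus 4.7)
The plan is to verify the five claims of Proposition~\ref{DYAD} in order, using systematically the quasi-homogeneity $|t^{1/M}\xi|_M=t|\xi|_M$ from Proposition~\ref{WPROP}(iii). Scaling yields $\varphi_0(\xi)=\phi(|\xi|_M/2)-\phi(|\xi|_M)$ and, for $h\geq 0$, $\varphi_h(\xi)=\phi(2^{-(h+1)}|\xi|_M)-\phi(2^{-h}|\xi|_M)$. The support statements \eqref{SC4BIS} then read off from the conditions on $\phi$ (namely $\phi\equiv 1$ on $[0,\frac{1}{2K}]$ and $\phi\equiv 0$ on $(K,\infty)$), and the telescoping
$$\sum_{h=-1}^{N}\varphi_h(\xi)=\phi(2^{-(N+1)}|\xi|_M)$$
tends to $\phi(0)=1$ as $N\to\infty$; in fact for any fixed $\xi$ the right-hand side already equals $1$ once $N$ is large enough, which simultaneously gives \eqref{SC5} and its local finiteness.

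For the symbol estimate \eqref{SC6}, I would change variables to $\eta=2^{-h/M}\xi$ and apply the chain rule to obtain
$$\xi^\gamma\partial^{\alpha+\gamma}\varphi_h(\xi)=2^{-h\alpha\cdot\frac{1}{M}}\,\eta^\gamma(\partial^{\alpha+\gamma}\varphi_0)(\eta),$$
whose right-hand side is bounded by a constant $C_{\alpha,\gamma,K}$ since $\varphi_0\in C_0^\infty$ and $\eta^\gamma$ is uniformly controlled on its compact support. Assertion \eqref{SC5BIS} then follows from the partial-sum identity above: $\phi(2^{-(N+1)}|\cdot|_M)\hat u\to \hat u$ in $\mathcal S'(\mathbb R^n)$ by dominated convergence on Schwartz test functions, and $\mathcal F^{-1}$ is continuous on $\mathcal S'$.

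The upper bound in \eqref{SC5TER} follows directly from Proposition~\ref{QBI}: because $\textup{supp}\,\hat u_h\subset B^M_{2^{h+1}K}$, for every $\alpha$ with $\alpha\cdot\frac{1}{M}=k$
$$\|\partial^\alpha u_h\|_{L^\infty}\leq c_\alpha(2^{h+1}K)^{k}\|u_h\|_{L^\infty}\leq C_k\,2^{hk}\|u_h\|_{L^\infty},$$
and summing over the finite set of such multi-indices yields the claim. For the lower bound, I would first reduce to a fixed frequency scale via the anisotropic dilation $v(y):=u_h(2^{-h/m_1}y_1,\dots,2^{-h/m_n}y_n)$; then $\hat v$ is supported in the fixed quasi-homogeneous annulus $A:=\{\frac{1}{2K}\leq|\eta|_M\leq 2K\}$, while $\|v\|_{L^\infty}=\|u_h\|_{L^\infty}$ and $\|\partial^\alpha v\|_{L^\infty}=2^{-hk}\|\partial^\alpha u_h\|_{L^\infty}$ whenever $\alpha\cdot\frac{1}{M}=k$. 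It therefore suffices to prove the reverse Bernstein inequality $\|v\|_{L^\infty}\leq C\sum_{\alpha\cdot\frac{1}{M}=k}\|\partial^\alpha v\|_{L^\infty}$ at unit scale.

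This last step is the main obstacle, and is where the anisotropy bites. Since $\sum_j\eta_j^{2m_j}=|\eta|_M^2\geq(2K)^{-2}$ on $A$, the open sets $U_j=\{|\eta_j|>c_j\}$ cover it for suitable $c_j>0$; pick a smooth partition of unity $\{\sigma_j\}_{j=1}^n\subset C_0^\infty(\mathbb R^n)$ subordinate to this cover, with $|\eta_j|$ bounded away from zero on $\textup{supp}\,\sigma_j$ and with $\sum_j\sigma_j\equiv 1$ on a neighborhood of $\textup{supp}\,\hat v$. Because $m_j,k\in\mathbb Z_+$, the multi-index $m_j k\,e_j$ lies in $\mathbb Z_+^n$ and satisfies $(m_j k\,e_j)\cdot\frac{1}{M}=k$, so
$$h_j(\eta):=\frac{\sigma_j(\eta)}{(i\eta_j)^{m_j k}}\in C_0^\infty(\mathbb R^n),\qquad\mathcal F^{-1}h_j\in L^1(\mathbb R^n).$$
Writing
$$\hat v(\eta)=\sum_{j=1}^n\sigma_j(\eta)\hat v(\eta)=\sum_{j=1}^n h_j(\eta)\widehat{\partial_j^{m_j k}v}(\eta),$$
inverting, and applying Young's inequality give $\|v\|_{L^\infty}\leq\sum_j\|\mathcal F^{-1}h_j\|_{L^1}\|\partial^{m_j k\,e_j}v\|_{L^\infty}\leq C\sum_{\alpha\cdot\frac{1}{M}=k}\|\partial^\alpha v\|_{L^\infty}$; rescaling back to frequency $2^h$ closes the proof. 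The essential observation is that the integrality of $m_j k$, guaranteed by $m_j\in\mathbb Z_+$, lets the pure monomials $(i\eta_j)^{m_j k}$ divide the partition functions exactly, providing the anisotropic substitute for the classical elliptic argument.
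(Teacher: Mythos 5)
Your proof is correct, and it follows the paper's own argument step for step in every part except one: the support properties, the telescoping identity giving \eqref{SC5} and \eqref{SC5BIS}, the rescaling/chain-rule proof of \eqref{SC6}, and the upper bound in \eqref{SC5TER} via Proposition~\ref{QBI} are exactly the paper's proofs. The only divergence is in the reverse Bernstein inequality (the lower bound in \eqref{SC5TER}). The paper stays at scale $2^h$ and performs a single global division: it writes $\varphi_0(\xi)=\bigl(\sum_{\alpha\cdot\frac1M=k}\xi^\alpha\chi_\alpha(\xi)\bigr)\varphi_0(\xi)$ with $\chi_\alpha=\xi^\alpha\chi/\sum_{\alpha\cdot\frac1M=k}(\xi^\alpha)^2\in C^\infty_0$ (the denominator is elliptic away from the origin precisely because it dominates $\sum_j\xi_j^{2m_jk}$), obtaining $2^{hk}u_h=\sum_{\alpha\cdot\frac1M=k}2^{h/|M|}\bigl(\mathcal F^{-1}\chi_\alpha\bigr)(2^{h/M}\cdot)\ast D^\alpha u_h$ and concluding with Young's inequality, exactly as you do. You instead rescale $u_h$ to unit frequency and use a partition of unity subordinate to the coordinate cover $\{|\eta_j|>c_j\}$ of the annulus, so that on each piece only the single monomial $(i\eta_j)^{m_jk}$ is inverted. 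The two implementations rest on the identical observation --- that the monomials $\xi^\alpha$ with $\alpha\cdot\frac1M=k$ cannot simultaneously vanish on the dyadic crown, thanks to the integrality of $km_j$ --- and both close with Young's inequality, so they are essentially equivalent; yours makes the unit-scale inequality explicit and uses only the $n$ extreme multi-indices $km_je_j$, while the paper's avoids the partition of unity by dividing by the full sum of squares and carrying the dilation $2^{-h/M}\xi$ through the computation.
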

\begin{proof}
It is trivial to prove \eqref{SC4BIS}. For every fixed $\xi\in \mathbb{R}^n$ we have $\phi\left(\left\vert 2^{-h/M}\xi\right\vert_M \right)=1$ for any suitably large integer $h$; then \eqref{SC5}, \eqref{SC5BIS} follow.
\newline
For every integer $h\geq 0$ we obtain
\begin{equation*}\label{SC7}
\left\vert \xi^\gamma\partial ^{\alpha+\gamma}\varphi_h(\xi)\right\vert=\left\vert \left(2^{-h/M}\xi\right)^\gamma\left(\partial^{\alpha+\gamma}\varphi_0\right)\left( 2^{-h/M}\xi\right)\right\vert 2^{-h\left(\alpha\cdot\frac 1M\right)}\leq C_{\alpha,\gamma,K}2^{-h\left(\alpha\cdot\frac1M\right)},
\end{equation*}
where $C_{\alpha,\gamma,K} =\max\limits_\eta \left\vert \eta^\gamma\partial^{\alpha+\gamma}\varphi_0(\eta)\right\vert$ is independent of $h$; thus \eqref{SC6} is proved.\\
In order to prove at the end \eqref{SC5TER}, let us consider $\chi(\xi)\in C^\infty_0(\mathbb R^n)$ identically equal to one in a suitable neighborhood of supp $\varphi_0$. We can then write
\begin{equation}\label   {CRA1}
\varphi_0(\xi)=\left( \sum_{\alpha\cdot\frac 1M=k}\xi^\alpha \chi_\alpha(\xi)\right)\varphi_0(\xi),
\end{equation}
with
\begin{equation}\label{CRA2}
\chi_\alpha(\xi)= \frac{\xi^\alpha\chi(\xi)}{\sum_{\alpha\cdot\frac1M=k}(\xi^\alpha)^2}\in C^\infty_0(\mathbb R^n).
\end{equation}
Thus we have:
\begin{equation}\label{CRA3}
\begin{array}{ll}
\displaystyle\hat u_h(\xi)=\varphi_0\left(2^{-\frac hM}\xi\right)\hat u(\xi)& \displaystyle=\sum_{\alpha\cdot\frac1M =k}\left(2^{-\frac hM}\xi\right)^\alpha\chi_\alpha\left(2^{-\frac hM}\xi\right)\hat u(\xi)\\
& \displaystyle =\sum_{\alpha\cdot\frac1M =k}2^{-h\alpha\cdot\frac1M}\xi^\alpha \chi_\alpha \left(2^{-\frac hM}\xi\right)\hat u_h(\xi)\\
& \displaystyle =2^{-hk}\sum_{\alpha\cdot\frac1M =k}\chi_\alpha \left(2^{-\frac hM}\xi\right)\widehat{D^\alpha u_h}(\xi)\,.
\end{array}
\end{equation}
We have then verified.
\begin{equation}\label{CRA4}
2^{hk}u_h=\sum_{\alpha\cdot\frac1M =k}2^{\frac {h}{\vert M\vert}}\left(\left(\mathcal F^{-1}\chi_\alpha\right)(2^{\frac hM}\cdot)\right)\ast D^\alpha u_h,
\end{equation}
which in view of the Young inequality and Proposition \ref{QBI} shows \eqref{SC5TER}.
\end{proof}
We call the sequences $\varphi:=\{\varphi_h\}_{h=-1}^\infty$, defined in \eqref{SC2BIS}, and $\{u_h\}_{h=-1}^\infty$, defined in \eqref{UP}, respectively  {\it quasi-homogeneous partition of unity} and  {\it quasi-homogeneous dyadic decomposition} of $u$.\\
Following the arguments in \cite[\S 10.1]{TR} we can introduce now the classes of quasi-homogeneous  Besov functions and state their properties in suitable way.
\begin{defn}
For any $s\in\mathbb R$ and $u\in \mathcal S'(\mathbb R^n)$ we say that $u$ belongs to the {\it quasi-homogeneous Besov space} $B^{s,M}_{\infty,\infty}$ if
\begin{equation}\label{HOLDER}
\Vert u \Vert^\varphi_{B^{s,M}_{\infty,\infty}}:=\sup_{h=-1,\dots}2^{sh}\Vert u_h\Vert_{L^\infty}<\infty
\end{equation}
is satisfied for some quasi-homogeneous partition of unity $\varphi$.
\end{defn}
Different choices of the partition of unity $\varphi$ in \eqref{HOLDER} give raise to equivalent norms, noted by $\Vert \cdot\Vert_{B^{s,M}_{\infty,\infty}}$. The space $B^{s,M}_{\infty,\infty}$ has Banach structure and when $M=(1,\dots,1)$ and $s>0$, it is the usual  H\"older-Zygmund space.
\begin{prop}\label{CHAR}
Let us consider a sequence of Schwartz distributions $\{u_h\}_{h=-1}^\infty\subset\mathcal S'(\mathbb R^n)$ and a constant $K>1$ such that supp $\hat u_h\subset C_h^{K,M}$ for any $h\ge -1$. Set now $u:=\sum_{h=-1}^\infty u_h$.
\newline
The following properties are satisfied:
\begin{equation}\label{HOLDERCHAR}
\begin{array}{ll}
&\sup\limits_{h\ge -1}\left(2^{rh}\Vert u_h \Vert_{L^\infty}\right)<\infty\Rightarrow u\in B^{r,M}_{\infty,\infty}, \quad r\in \mathbb R,\\
\text{and}&\\
&\Vert u\Vert_{B^{r,M}_{\infty,\infty}}\le C \sup\limits_{h\ge -1}\left(2^{rh}\Vert u_h \Vert_{L^\infty}\right),
\end{array}
\end{equation}
where the constant $C$ is independent of the sequence $\{u_h\}_{h=-1}^\infty$.\\
When  $r>0$, \eqref{HOLDERCHAR} is true for all the sequences of Schwartz distributions $\{u_h\}_{h=-1}^{\infty}$ with supp $\hat u_h\subseteq B^{K,M}_h:=B^M_{K2^{h+1}}=\{\xi\in\mathbb{R}^n:\,\,\vert\xi\vert_M\leq K2^{h+1}\}$, $h=-1,0,\dots$.
\end{prop}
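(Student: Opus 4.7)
The plan is to estimate the dyadic blocks of $u$ in a chosen quasi-homogeneous partition of unity $\varphi = \{\varphi_k\}_{k \geq -1}$ of Proposition \ref{DYAD}, by comparing them to the given pieces $u_h$. Writing $U_k := \varphi_k(D)u = \sum_{h \geq -1}\varphi_k(D)u_h$, the goal reduces to proving $2^{rk}\Vert U_k\Vert_{L^\infty} \leq C \sup_h 2^{rh}\Vert u_h\Vert_{L^\infty}$ with $C$ independent of $k$; taking the supremum over $k$ then yields \eqref{HOLDERCHAR}.

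Two ingredients drive the estimate. First, for $k \geq 0$ the identity $\varphi_k(\xi) = \varphi_0(2^{-k/M}\xi)$ together with the anisotropic change of variables $\eta = 2^{-k/M}\xi$ in the inverse Fourier transform shows that $\Vert\mathcal{F}^{-1}\varphi_k\Vert_{L^1} = \Vert\mathcal{F}^{-1}\varphi_0\Vert_{L^1}$ is independent of $k$ (and $\Vert\mathcal{F}^{-1}\varphi_{-1}\Vert_{L^1}$ is just a fixed constant); Young's convolution inequality then gives $\Vert \varphi_k(D)u_h\Vert_{L^\infty} \leq C \Vert u_h\Vert_{L^\infty}$ uniformly in $k,h$. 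Second, support considerations restrict the sum: by \eqref{SC4BIS}, $\varphi_k$ lives in the annulus $C_k^{K,M}$, and by hypothesis $\hat{u}_h$ is supported in $C_h^{K,M}$; these annuli intersect only when $|h-k| \leq N_0$ for some constant $N_0 = N_0(K)$.

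For the annular case the two facts combine to give $\Vert U_k\Vert_{L^\infty} \leq C \sum_{|h-k| \leq N_0}\Vert u_h\Vert_{L^\infty}$, and multiplying by $2^{rk}$ and using $2^{rk} \leq 2^{rN_0}2^{rh}$ in that range yields the desired bound for every $r \in \mathbb{R}$. In the ball case, $B_h^{K,M}$ meets $C_k^{K,M}$ whenever $h \geq k - N_0$ (for a possibly different constant); the same Young bound produces $2^{rk}\Vert U_k\Vert_{L^\infty} \leq C \sum_{h \geq k - N_0} 2^{r(k-h)}\bigl(2^{rh}\Vert u_h\Vert_{L^\infty}\bigr)$, and the geometric series $\sum_{j \geq -N_0} 2^{-rj}$ converges precisely because $r > 0$, which pinpoints why this hypothesis is indispensable in the second assertion.

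The main delicate point I foresee is not the convolution estimate but the preliminary verification that $u := \sum u_h$ is well-defined in $\mathcal{S}'(\mathbb{R}^n)$ before one can write $\varphi_k(D)u$. When $r > 0$ the uniform bound $\Vert u_h\Vert_{L^\infty} \leq C 2^{-rh}$ immediately makes the series absolutely convergent against any Schwartz test function. When $r \leq 0$, one must recover decay by integration by parts, exploiting in the annular case that $\hat{u}_h$ vanishes in a neighborhood of the origin for $h \geq 0$, and hence $u_h$ may be represented as a high-order derivative of a distribution with controlled $L^\infty$ norm; once convergence in $\mathcal{S}'$ is secured, the rest is bookkeeping.
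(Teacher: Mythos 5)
Your proof is correct and follows essentially the approach the paper itself relies on (the paper gives no proof of Proposition \ref{CHAR}, deferring to the standard arguments of Triebel, \S 10.1): uniform Young bounds for $\varphi_k(D)$ obtained from the anisotropic scaling $\varphi_k(\xi)=\varphi_0(2^{-k/M}\xi)$, together with the support dichotomy — finitely many interacting blocks in the annulus case (any $r\in\mathbb R$), a one-sided geometric tail requiring $r>0$ in the ball case. The only step worth writing out in full is the $\mathcal S'(\mathbb R^n)$-convergence of $\sum_h u_h$ when $r\le 0$, which you correctly reduce to representing $u_h=2^{-hk}\sum_{\alpha\cdot\frac1M=k}D^\alpha g_{h,\alpha}$ with $\Vert g_{h,\alpha}\Vert_{L^\infty}\le C\Vert u_h\Vert_{L^\infty}$, exactly as in \eqref{CRA4}, and choosing $k>-r$.
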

\begin{prop}[Quasi-homogeneous Meyer multipliers]\label{MEMU}
Consider a family of smooth functions $\{ m_h\}_{h=-1}^\infty$ such that for any $\alpha\in\mathbb Z_+^n$:
\begin{equation}\label{eqMEMU1}
\Vert\partial^\alpha m_h\Vert_{L^\infty}\leq C_\alpha 2^{h\, \alpha\cdot\frac1M}.
\end{equation}
Then the linear operator $L=\sum_{h=-1}^\infty m_h(x)\varphi_h(D)$ maps continuously $B^{s,M}_{\infty,\infty}$ into itself, for any $s>0$.
\end{prop}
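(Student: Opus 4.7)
The plan is to split $Lu$ into two pieces via a low/high-frequency decomposition of each multiplier $m_h$, and then to invoke the ball-spectrum version of Proposition \ref{CHAR} (precisely where the hypothesis $s>0$ matters) for each piece. Setting $\psi_h(\xi):=\sum_{l=-1}^{h-1}\varphi_l(\xi)=\phi(|2^{-h/M}\xi|_M)$, I write $m_h=m_h^<+m_h^\ge$, where $m_h^<:=\psi_h(D)m_h$ has Fourier support in $|\xi|_M\lesssim 2^h$, while $m_h^\ge:=\sum_{l\ge h}\varphi_l(D)m_h$ collects the higher-frequency part. Correspondingly $Lu=\sum_h m_h^<u_h+\sum_h m_h^\ge u_h$.

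For the low-frequency piece, the convolution kernel of $\psi_h(D)$ has $L^1$ norm independent of $h$ by the dilation argument of Proposition \ref{QBI}, so $\|m_h^<\|_{L^\infty}\le C$ uniformly in $h$. By the quasi-triangle inequality (Proposition \ref{WPROP} ii)), $m_h^<u_h$ has Fourier support in a quasi-ball $B^M_{K'2^h}$, and
\begin{equation*}
\|m_h^<u_h\|_{L^\infty}\le C\|u_h\|_{L^\infty}\le C2^{-sh}\|u\|_{B^{s,M}_{\infty,\infty}}.
\end{equation*}
Since $s>0$, the ball-spectrum form of Proposition \ref{CHAR} yields $\sum_h m_h^<u_h\in B^{s,M}_{\infty,\infty}$ with the required continuity bound.

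For the high-frequency sum $\sum_h m_h^\ge u_h=\sum_h\sum_{l\ge h}m_{h,l}u_h$, with $m_{h,l}:=\varphi_l(D)m_h$, the key estimate is
\begin{equation*}
\|m_{h,l}\|_{L^\infty}\le C_N 2^{-N(l-h)},\qquad l\ge h,\ N\in\mathbb{Z}_+,
\end{equation*}
which I would derive by applying the reverse Bernstein inequality \eqref{SC5TER} to $m_{h,l}$ (whose Fourier support lies in $C_l^{K,M}$) together with hypothesis \eqref{eqMEMU1}: for $\alpha\cdot\frac1M=N$ one has $\|\partial^\alpha m_{h,l}\|_{L^\infty}\le C\|\partial^\alpha m_h\|_{L^\infty}\le C_N2^{hN}$, and dividing by $2^{lN}$ via \eqref{SC5TER} gives the stated bound. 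I would then reorganize the double sum by collecting the pieces living at spectral scale $2^j$: set $w_j:=\sum_{h=-1}^{j}m_{h,j}u_h$, so that $\textup{supp}\,\hat w_j\subset B^M_{K'2^j}$ by Proposition \ref{WPROP} ii); fixing any integer $N>s$,
\begin{equation*}
\|w_j\|_{L^\infty}\le C_N 2^{-Nj}\sum_{h\le j}2^{(N-s)h}\|u\|_{B^{s,M}_{\infty,\infty}}\le C_{N,s}2^{-sj}\|u\|_{B^{s,M}_{\infty,\infty}},
\end{equation*}
the last inequality using $\sum_{h\le j}2^{(N-s)h}\lesssim 2^{(N-s)j}$, which holds precisely because $N>s$.

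A second application of Proposition \ref{CHAR} to $\{w_j\}_j$ (again needing $s>0$) yields $\sum_j w_j=\sum_h m_h^\ge u_h\in B^{s,M}_{\infty,\infty}$, and summing the two contributions gives the continuity of $L$ on $B^{s,M}_{\infty,\infty}$. The main technical obstacle I anticipate is the rearrangement of the double sum together with the decay bound on $\|m_{h,l}\|_{L^\infty}$ for $l\ge h$, since a naive estimate would introduce a logarithmic loss in $\|u\|_{B^{s,M}_{\infty,\infty}}$; the freedom to take $N>s$ is exactly what secures the geometric convergence needed to preserve the Besov exponent $s$, and it is also where the positivity assumption $s>0$ enters in an essential way through Proposition \ref{CHAR}.
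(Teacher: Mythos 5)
Your proposal is correct and follows essentially the same route as the paper: you decompose each multiplier $m_h$ into frequency-localized pieces, derive the decay $\Vert m_{h,l}\Vert_{L^\infty}\le C_N2^{-N(l-h)}$ from the reverse Bernstein inequality \eqref{SC5TER} combined with hypothesis \eqref{eqMEMU1}, and conclude via Proposition \ref{CHAR} after choosing $N>s$. The only difference is bookkeeping: you use the undilated partition $\varphi_l$ and resum the double series by the absolute output scale $j$, so that each $w_j$ has ball spectrum and the $s>0$ version of Proposition \ref{CHAR} is applied once per piece, whereas the paper indexes by the relative offset $k=l-h$, bounds each operator $M_k$ separately, and then sums the resulting operator norms geometrically in $k$.
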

\begin{proof}
Consider the quasi-homogeneous partition of unity in Proposition \ref{DYAD}, with $K=1$, and for any $h=-1,0,\dots$ and $T>2$ write:
\begin{equation}\label{eqMEMU2}
\hat m_h=\sum_{k=-1}^\infty\varphi_k\left(\left(2^hT\right)^{-\frac 1M}\cdot \right)\hat m_h=\sum_{k=-1}^\infty\hat m_{h,k}\,.
\end{equation}
Notice that $\hat m_{h,-1}(\xi)=\phi\left(\left|\left(2^h T\right)^{-\frac 1M}\xi\right|_M\right)\hat m_h(\xi)$, and  when $h\geq 0$, $\hat m_{h,k}(\xi)=\varphi_0\left(\left(2^{h+k}T\right)^{-\frac 1M}\xi\right)\hat m_h(\xi)$.\\
Thus for any $u\in\mathcal S'(\mathbb R^n)$, by setting $M_ku=\sum_{h=-1}^\infty m_{h,k} u_h$ for $k\ge -1$, we have:
\begin{equation}\label{eqMEMU3}
Lu=\sum_{h=-1}^\infty m_h\varphi_h(D)u=\sum_{h=-1}^\infty m_h u_h=\sum_{
k=-1\atop
h=-1
}^\infty m_{h,k}u_h=\sum_{k=-1}^\infty M_k u.
\end{equation}
Notice now that for any $h,k\geq -1$:
\begin{equation}\label{eqMEMU4}
\begin{array}{ll}
 \Vert m_{h,k}u_h\Vert_{L^\infty}\leq \Vert m_{h,k}\Vert_{L^\infty}\Vert u_k\Vert_{L^\infty};&\\
&\\
\textup{supp}\, \widehat{m_{h,-1}u_h}\subset B^{T,M}_h+C^{1,M}_h\subset B_h^{K,M};&\\
&\\
\textup{supp}\, \widehat{m_{h,k}u_h}\subset C^{T,M}_{h+k}+C^{1,M}_h\subset C_{h+k}^{K,M}, &\text{for suitable constants}\,\, T, K.
\end{array}
\end{equation}
Using now \eqref{SC5TER} and \eqref{eqMEMU1}, for any  integer $l>0$ there exist positive constants $C_l>0$ such that 
\begin{equation}\label{eqMEMU5}
\Vert m_{h,k}\Vert _{L^\infty}\leq C_l \sum_{\alpha\cdot \frac 1M=l}\Vert \partial^\alpha m_{h,k}\Vert_{L^\infty}2^{-(h+k)l}\leq C_l 2^{-kl}.
\end{equation}
Thus for any $s>0$:
\begin{equation}\label{eqMEMU6}
\begin{array}{ll}
2^{s(h+k)}\Vert m_{h,k}u_h\Vert_{L^\infty}&\leq 2^{s(h+k)}\Vert m_{h,k}\Vert_{L^\infty}\Vert u_h\Vert_{L^\infty}\\
\\
& \le C_l 2^{sh}2^{(s-l)k}\Vert u_h\Vert_{L^\infty}\leq C_l 2^{(s-l)k}\Vert u\Vert_{B^{s,M}_{\infty,\infty}}.
\end{array}
\end{equation}
Thus for any $s>0$, $l\geq 1$ and $k\geq -1$, in view of Proposition \ref{CHAR}, we get 
\begin{equation}\label{NUOVA}
\Vert M_k u\Vert_{B^{s,M}_{\infty, \infty}}\leq C_l 2^{(s-l)k}\Vert u\Vert_{B^{s,M}_{\infty,\infty}}.
\end{equation}
Then, by choosing $l>s$, in view of \eqref{eqMEMU3} and \eqref{NUOVA} we conclude that  $\Vert Lu\Vert_{B^{s,M}_{\infty,\infty}}\leq C_s\Vert u\Vert_{B^{s,M}_{\infty, \infty}}$.
\end{proof}
\begin{thm}\label{COMP}
Consider $F\in C^\infty(\mathbb C)$ such that $F(0)=0$, $s>0$. Then, for any $u\in B^{s,M}_{\infty,\infty}$ and suitable  $C=C(F, \Vert u\Vert_{L^\infty})$,  we have:
\begin{equation}\label{eqCOMP1}
F(u)\in B^{s,M}_{\infty,\infty} \quad\text{and}\quad \Vert F(u)\Vert_{B^{s,M}_{\infty,\infty}}\leq C\Vert u\Vert_{B^{s,M}_{\infty,\infty}}.
\end{equation}
\end{thm}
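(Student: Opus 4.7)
The plan is to adapt the Bony--Meyer paralinearization strategy: telescope $F(u)$ along the quasi-homogeneous dyadic decomposition, then recognize the resulting operator as a Meyer multiplier and invoke Proposition \ref{MEMU}. Setting $S_N u:=\sum_{h=-1}^{N-1}u_h$ for $N\ge 0$ (so $S_0 u=u_{-1}$), the assumption $s>0$ gives
\begin{equation*}
\sum_{h\ge -1}\Vert u_h\Vert_{L^\infty}\le \Vert u\Vert_{B^{s,M}_{\infty,\infty}}\sum_{h\ge -1}2^{-sh}<\infty,
\end{equation*}
so $u\in L^\infty$ with $\Vert u\Vert_{L^\infty}\le C_s\Vert u\Vert_{B^{s,M}_{\infty,\infty}}$ and $S_N u\to u$ uniformly on $\mathbb R^n$.

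Treating $F\in C^\infty(\mathbb C)$ through its Wirtinger derivatives $\partial_z F,\partial_{\bar z} F$ and using $F(0)=0$, the fundamental theorem of calculus applied to each difference $F(S_{N+1}u)-F(S_N u)$ produces the identity
\begin{equation*}
F(u)=\sum_{N=-1}^\infty\bigl(m_N(x)u_N(x)+\widetilde m_N(x)\overline{u_N(x)}\bigr),
\end{equation*}
with $m_N(x):=\int_0^1\partial_z F(w_N^t(x))\,dt$, $\widetilde m_N(x):=\int_0^1\partial_{\bar z}F(w_N^t(x))\,dt$, where $w_N^t:=S_N u+tu_N$ for $N\ge 0$ and $w_{-1}^t:=tu_{-1}$; the series converges uniformly by the continuity of $F$ and the previous paragraph. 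The core step is verifying the Meyer condition \eqref{eqMEMU1} for $\{m_N\}$ and $\{\widetilde m_N\}$. For $N\ge 0$, $w_N^t$ has Fourier support in $B^M_{K2^{N+1}}$ and $\Vert w_N^t\Vert_{L^\infty}\le C\Vert u\Vert_{L^\infty}$ uniformly in $N$ and $t\in[0,1]$, so Proposition \ref{QBI} yields
\begin{equation*}
\Vert\partial^\beta w_N^t\Vert_{L^\infty}\le c_\beta\,2^{N\beta\cdot\frac 1M}\Vert u\Vert_{L^\infty}.
\end{equation*}
Fa\`a di Bruno applied to $\partial^\alpha[\partial_z F(w_N^t)]$ (and analogously for $\partial_{\bar z}F$) produces a finite sum over partitions $\beta_1+\dots+\beta_\ell=\alpha$, each term bounded by derivatives of $F$ on the fixed ball of radius $C\Vert u\Vert_{L^\infty}$ in $\mathbb C$ times $\prod_i 2^{N\beta_i\cdot\frac 1M}=2^{N\alpha\cdot\frac 1M}$; integrating in $t$ gives $\Vert\partial^\alpha m_N\Vert_{L^\infty}+\Vert\partial^\alpha\widetilde m_N\Vert_{L^\infty}\le C_\alpha(F,\Vert u\Vert_{L^\infty})\,2^{N\alpha\cdot\frac 1M}$. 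The case $N=-1$ is immediate, since $u_{-1}$ has spectrum in the fixed ball $B^M_K$.

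Since $\overline{u_N}=\varphi_N(D)\bar u$ (because $\varphi_N$ is real and even in each $\xi_j$, the weight $|\xi|_M$ being even in each variable) and $\bar u\in B^{s,M}_{\infty,\infty}$ has the same norm as $u$, Proposition \ref{MEMU}, valid for $s>0$, applied separately to the two Meyer-multiplier operators $\sum_N m_N\varphi_N(D)$ and $\sum_N\widetilde m_N\varphi_N(D)$, yields
\begin{equation*}
\Vert F(u)\Vert_{B^{s,M}_{\infty,\infty}}\le C(F,\Vert u\Vert_{L^\infty})\Vert u\Vert_{B^{s,M}_{\infty,\infty}},
\end{equation*}
which is \eqref{eqCOMP1}. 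The main obstacle is precisely the Meyer estimate for $m_N,\widetilde m_N$: one must ensure that the exponents produced by Fa\`a di Bruno combine \emph{exactly} to $2^{N\alpha\cdot\frac 1M}$, which relies crucially on the sharp quasi-homogeneous Bernstein inequality of Proposition \ref{QBI} and on the uniform $L^\infty$ control of the inner argument guaranteed by the hypothesis $s>0$.
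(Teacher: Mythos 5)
Your proof is correct and follows essentially the same route as the paper: telescoping $F(u)$ along the quasi-homogeneous dyadic decomposition, rewriting each increment via the fundamental theorem of calculus as a multiplier times $u_N$, and verifying the Meyer condition \eqref{eqMEMU1} by Fa\`a di Bruno combined with the Bernstein-type inequality of Proposition \ref{QBI}, before invoking Proposition \ref{MEMU}. The only (welcome) refinement is your explicit handling of the Wirtinger derivatives $\partial_z F$, $\partial_{\bar z}F$ and of the conjugate term $\widetilde m_N\overline{u_N}$, which the paper's proof suppresses by writing a single $F'$.
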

\begin{proof}
Using the notations in Proposition \ref{DYAD} let us define for any integer $p\geq 0$, $\Psi_p u=\Psi_p(D)u$, where $\Psi_p(\xi)=\sum\limits_{-1\leq h\leq p-1}\varphi_h(\xi)$. Since $F(0)=0$ setting moreover $\Psi_{-1}(\xi)=0$ we can consider the telescopic expansion:
\begin{equation}\label{eqCOMP2}
F(\Psi_0 u)+\sum_{p=0}^\infty \left(F(\Psi_{p+1}u)-F(\Psi_p u)\right)=\sum_{p=-1}^\infty \left(F(\Psi_{p+1}u)-F(\Psi_p u)\right)\,.
\end{equation}
By means of standard computations we have for any $p\geq 0$
\begin{equation}\label{eqCOMP3}
F(\Psi_{p+1}u)- F(\Psi_p u)=u_p\int_0^1 F'(\Psi_{p}u+t u_p)\,dt .
\end{equation}
Thus by setting
\begin{equation}\label{eqCOMP4}
 m_p(x)=\int_0^1  F'(\Psi_{p}u +t u_p)\, dt\,,
\end{equation}
we obtain $ F(u)=\sum_{p=-1}^\infty m_p u_p=Lu$. It is now sufficient to verify that $m_p$ defined in \eqref{eqCOMP4} is a Meyer multiplier.\\
Without any loss of generality, it is enough to consider $\tilde m_p=G\left(\Psi_p u\right)$, with $G= F'\in C^\infty$. Then
\begin{displaymath}
\partial^\alpha G\left(\Psi_pu\right)=\sum G^{(q)}\left(\Psi_pu\right)\left(\partial^{\gamma_1}\Psi_pu\right)\dots\left(\partial^{\gamma_q}\Psi_pu\right),\label{PP30}
\end{displaymath}
where $1\leq q\leq\vert\alpha\vert$, $\gamma_1+\dots+\gamma_q=\alpha$ and $\vert\gamma_j\vert\geq 1$, $j=1,\dots,q$.\\
It follows from the Proposition \ref{QBI}  that for any multi-index $\gamma_j$:
\begin{displaymath}
\Vert\partial^\gamma\Psi_p u\Vert_{L^\infty}\leq C 2^{p(\gamma_j\cdot \frac1M)}\Vert \Psi_p u\Vert_{L^\infty}.\label{PP31}
\end{displaymath}
Then for a suitable positive constant $C$ depending on $\alpha, G$ and $\Vert u\Vert_{L^\infty}$:
\begin{displaymath}
\Vert\partial^\alpha G\left(\psi_pu\right)\Vert_{L^\infty}\leq C 2^{p\left(\gamma_1\cdot\frac1{M}+\dots\gamma_q\cdot \frac 1{M}\right)}\leq C2^{p\left(\alpha\cdot\frac1{M}\right)},\label{PP32}
\end{displaymath}
which ends the proof.
\end{proof}
\begin{rem}\label{REMCOMP}
Set $\tilde F(t)=F(t)-F(0)$, with $F\in C^\infty(\mathbb C)$. Since the constant functions belong to $B^{s,M}_{\infty,\infty}$, we obtain that for any $u\in B^{s,m}_{\infty,\infty}$, $F(u)$ fulfills \eqref{eqCOMP1}, for any $s>0$.
\end{rem}
\section{Quasi-homogeneous symbols}\label{qomsymbols}
In this section, we recall the definition of some symbol classes   which are well behaved on the quasi-homogeneous structure of the spaces $B^{s, M}_{\infty, \infty}$. Here we just collect some basic definitions and a few related results, referring the reader to \cite{GM3, GM4} for a more detailed analysis. Let $M=(m_1,\dots, m_n)$ be a vector with positive integer components obeying the assumptions of the previous \S 2.
\begin{defn}\label{def:1}
For $m\in\mathbb{R}$ and $\delta\in[0,1]$, $S^m_{M,\delta}$ will be
the class of functions $a(x,\xi)\in
C^{\infty}(\mathbb{R}^n\times\mathbb{R}^n)$ such that for all $\alpha,\beta\in\mathbb{Z}^n_+$ there exists $C_{\alpha,\beta}>0$ such that:
\begin{equation}\label{eq:1}
\vert \partial^{\beta}_x\partial^{\alpha}_{\xi}a(x,\xi)\vert\le
C_{\alpha,\beta}\langle\xi\rangle_M^{m-\alpha\cdot 1/M+\delta\beta\cdot 1/M},\quad\forall x,\,\xi\in\mathbb{R}^n\,.
\end{equation}
We also set $S^m_M:=S^m_{M,0}$.
\end{defn}
\noindent
For each symbol $a\in S^m_{M,\delta}$, the pseudodifferential operator $a(x,D)={\rm Op}(a)$ is defined on $\mathcal{S}(\mathbb{R}^n)$ by the usual quantization
\begin{equation}\label{psdo}
a(x,D)u=(2\pi)^{-n}\int
e^{i\,x\cdot\xi}a(x,\xi)\hat u(\xi)\, d\xi,\quad u\in\mathcal S(\mathbb R^n).
\end{equation}
It is well-known that \eqref{psdo} defines a linear bounded operator from $\mathcal{S}(\mathbb{R}^n)$ to itself. In the following, we will denote by ${\rm Op}\,S^m_{M,\delta}$ the set of pseudodifferential operators with symbol in $S^m_{M,\delta}$ (and set ${\rm Op}\,S^m_{M}:={\rm Op}\,S^m_{M,0}$ according to Definition \ref{def:1}).
From Proposition \ref{WPROP}, iv), it is clear that $\langle\xi\rangle_M^m\in S^m_M$, for every $m\in\mathbb R$.\\
For pseudodifferential operators in ${\rm Op}\,S^m_{M,\delta}$, a suitable symbolic calculus is developed in \cite[Propositions 2.3-2.5]{GM4} under the restriction $\delta<\frac1{m^*}$; in particular the composition $a(x,D)b(x,D)$ of two operators $a(x,D)\in{\rm Op}\,S^m_{M,\delta}$, $b(x,D)\in{\rm Op}\,S^{m'}_{M,\delta}$ belongs to ${\rm Op}\,S^{m+m'}_{M,\delta}$, for all $m, m'\in\mathbb R$ as long as $\delta<\frac1{m^*}$.
\newline
The analysis of linear partial differential equations with rough coefficients needs the introduction of non smooth symbols studied in \cite{GM4}. We recall the  definitions and the main properties.
\begin{defn}\label{nonregularsymbols}
For $r>0$, $m\in\mathbb{R}$ and $\delta\in[0,1]$,
$B^{r,M}_{\infty,\infty}S^{m}_{M,\delta}$ is the set of  measurable functions $a(x,\xi)$ such that for every $\alpha\in\mathbb{Z}^n_+$
\begin{eqnarray}
\vert \partial^{\alpha}_{\xi}a(x,\xi)\vert\le
C_{\alpha}\langle\xi\rangle_M^{m-\alpha\cdot 1/M},\quad\forall
x,\,\xi\in\mathbb{R}^n;\label{eq:2}\\
\Vert \partial^{\alpha}_{\xi}a(\cdot,\xi)\Vert_{B^{r,M}_{\infty,\infty}}\le
C_{\alpha}\langle\xi\rangle_M^{m-\alpha\cdot 1/M+\delta
r},\quad\forall\xi\in\mathbb{R}^n.\label{eq:3}
\end{eqnarray}
\end{defn}
\noindent
As in  the case of smooth symbols, we set for brevity $B^{r,M}_{\infty,\infty}S^m_M:=B^{r,M}_{\infty,\infty}S^m_{M,0}$.
\begin{thm}\label{sobolev-holder-cont}
If $r>0$, $m\in\mathbb{R}$, $\delta\in[0,1]$ and $a(x,\xi)\in
B^{r,M}_{\infty,\infty}S^{m}_{M,\delta}$, then for all $s\in](\delta-1)r,r[$
\begin{eqnarray}
a(x,D):B^{s+m,M}_{\infty,\infty}\rightarrow B^{s,M}_{\infty,\infty}\label{boundedness2}
\end{eqnarray}
is a linear continuous operator.
\newline
If in addition $\delta< 1$, then the mapping property \eqref{boundedness2} is still true for $s=r$.
\end{thm}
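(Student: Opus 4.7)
The plan is to use a symbol-smoothing (paradifferential) decomposition adapted to the quasi-homogeneous weight, splitting $a$ into a smooth piece of the same order and a non-smooth piece of strictly lower order. Fix $\gamma$ with $\delta<\gamma<1$ and define, using the quasi-homogeneous partition $\{\varphi_{j}\}_{j\ge -1}$ of Proposition \ref{DYAD},
\begin{equation*}
 a^{\#}(x,\xi):=\sum_{j\ge -1}\bigl(\Psi_{\lfloor\gamma j\rfloor}(D_{x})a(\cdot,\xi)\bigr)(x)\,\varphi_{j}(\xi),\qquad a^{b}:=a-a^{\#},
\end{equation*}
where $\Psi_{N}=\sum_{l<N}\varphi_{l}$ is the quasi-homogeneous low-pass filter of Proposition \ref{DYAD}. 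Applying Proposition \ref{QBI} block by block shows that every $x$-derivative costs at most a factor $\langle\xi\rangle_{M}^{\gamma\,\beta\cdot 1/M}$ on $\mathrm{supp}\,\varphi_{j}$, hence $a^{\#}\in S^{m}_{M,\gamma}$. On the other hand, \eqref{eq:3} combined with Proposition \ref{QBI} gives $\|\varphi_{k}(D_{x})a(\cdot,\xi)\|_{L^{\infty}}\le C\,2^{-kr}\langle\xi\rangle_{M}^{m+\delta r}$, whence, summing over $k\ge\lfloor\gamma j\rfloor$, $\|\partial_{\xi}^{\alpha}a^{b}(\cdot,\xi)\|_{L^{\infty}}\le C_{\alpha}\langle\xi\rangle_{M}^{m-\alpha\cdot 1/M-(\gamma-\delta)r}$. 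Thus $a^{b}$ has gained $(\gamma-\delta)r$ orders of decay at the price of retaining only Besov regularity $B^{r,M}_{\infty,\infty}$ in $x$.

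For the smooth part, $a^{\#}(x,D)\in{\rm Op}\,S^{m}_{M,\gamma}$ is continuous $B^{s+m,M}_{\infty,\infty}\to B^{s,M}_{\infty,\infty}$ for the relevant values of $s$, by the Besov continuity of smooth quasi-homogeneous pseudodifferential operators proved in \cite{GM4}. All the real work therefore concerns $a^{b}(x,D)$. Decomposing $u=\sum_{j}u_{j}$ and further $a^{b}=\sum_{k}\varphi_{k}(D_{x})a^{b}$ in the $x$-variable, we obtain
\begin{equation*}
 a^{b}(x,D)u=\sum_{j,k}b_{j,k}(x,D)u_{j},\qquad b_{j,k}(x,\xi):=\bigl(\varphi_{k}(D_{x})a(\cdot,\xi)\bigr)(x)\varphi_{j}(\xi),
\end{equation*}
the double sum running over $j\ge -1$ and $k\ge\lfloor\gamma j\rfloor$, and split according to the three standard paraproduct regimes $k<j-N_{0}$, $|k-j|\le N_{0}$, $k>j+N_{0}$. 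In each regime Proposition \ref{QBI} identifies the $x$-spectrum of $b_{j,k}(x,D)u_{j}$ as a dyadic crown or ball at scale $2^{\max(j,k)}$, while the size estimate $\|b_{j,k}(\cdot,\xi)\|_{L^{\infty}}\le C\,2^{-kr}2^{j(m+\delta r)}$ controls its $L^{\infty}$-norm. The low-$k$/high-$j$ sum fits the structure of the Meyer multipliers of Proposition \ref{MEMU} and produces directly the target Besov bound; the high-$k$/low-$j$ sum, assembled via Proposition \ref{CHAR}, yields a geometric series in $k$ that converges precisely when $s>(\delta-1)r$, which is the lower endpoint stated in the theorem.

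The hard part is the resonant regime $|k-j|\le N_{0}$: here the $x$-spectrum of each piece lies in a ball of radius $\sim 2^{j}$ rather than in a crown, so one must invoke the second assertion of Proposition \ref{CHAR}, which requires a \emph{positive} Besov index; this is what forces the strict upper endpoint $s<r$. When $\delta<1$, the strict gain $(\gamma-\delta)r>0$ obtained by choosing $\gamma$ close to $1$ leaves room to keep the geometric summation strictly convergent also at $s=r$, so the $L^{\infty}$-bound \eqref{eq:2} on $a(\cdot,\xi)$ itself suffices to close the estimate at the endpoint; combining the three regimes with the smooth piece completes the proof.
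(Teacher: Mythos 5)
A preliminary caveat: the paper itself gives no proof of Theorem \ref{sobolev-holder-cont} (it is quoted from \cite{GM4}), so your sketch can only be judged on its own merits. Its architecture --- decompose the symbol dyadically in $x$ and $u$ dyadically in $\xi$, split into three paraproduct regimes, and feed the spectral localization of each block into Proposition \ref{CHAR} --- is the standard and correct route. The genuine gap is in the quantitative accounting: you have attributed the two endpoints to the wrong regimes. In the resonant regime $|k-j|\le N_0$ the output spectrum of $b_{j,k}(x,D)u_j$ lies in a \emph{ball} of radius comparable to $2^{j}$, and your own size estimate gives $\Vert b_{j,k}(x,D)u_j\Vert_{L^\infty}\le C\,2^{-jr}2^{j(m+\delta r)}2^{-j(s+m)}\Vert u\Vert_{B^{s+m,M}_{\infty,\infty}}=C\,2^{-j(s+(1-\delta)r)}\Vert u\Vert_{B^{s+m,M}_{\infty,\infty}}$; the second part of Proposition \ref{CHAR} (ball-supported spectra, strictly positive index) then demands $s+(1-\delta)r>0$, which is exactly the \emph{lower} endpoint $s>(\delta-1)r$, not the upper one. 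At $s=r$ this index equals $(2-\delta)r>0$, so the resonant regime never obstructs $s=r$. The upper endpoint comes instead from the regime $k\ge j+N_0$: grouping by output frequency $2^k$ (crown-supported), one gets $2^{ks}\Vert\sum_{j\le k-N_0}b_{j,k}(x,D)u_j\Vert_{L^\infty}\le C\,2^{k(s-r)}\sum_{j\le k}2^{j(\delta r-s)}\Vert u\Vert_{B^{s+m,M}_{\infty,\infty}}$, which is bounded in $k$ iff $s\le r$, with $s=r$ admissible exactly when $\delta<1$, since then $\sum_{j}2^{-j(1-\delta)r}$ converges. The mechanism you invoke for the endpoint ("the gain $(\gamma-\delta)r$ leaves room at $s=r$") is not what closes the estimate, and the claim that the high-$k$ sum "converges precisely when $s>(\delta-1)r$" cannot be obtained from the stated bounds.

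There are two further structural problems. First, the choice $\gamma\in\,]\delta,1[$ is impossible when $\delta=1$, so your argument does not cover the case $\delta=1$, which the theorem includes (for $s\in\,]0,r[$). In fact the preliminary smoothing $a=a^{\#}+a^{b}$ is redundant here: the low-$k$ part $k<j-N_0$ of $a$ itself already produces crown-supported output with $\Vert\cdot\Vert_{L^\infty}\le C2^{jm}\Vert u_j\Vert_{L^\infty}$ by \eqref{eq:2}, hence is controlled for every $s\in\mathbb R$ directly by the first part of Proposition \ref{CHAR}; running the three-regime analysis on $a$ itself handles all $\delta\in[0,1]$ at once. Second, invoking Proposition \ref{MEMU} for that low-$k$ part imports the spurious restriction $s>0$ and in any case does not produce the shift of order from $B^{s+m,M}_{\infty,\infty}$ to $B^{s,M}_{\infty,\infty}$. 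Finally, a complete write-up must convert the pointwise symbol bounds $\Vert b_{j,k}(\cdot,\xi)\Vert_{L^\infty}$ into $L^\infty\to L^\infty$ operator bounds for the localized operators, which requires the kernel estimates obtained by integrating by parts in $\xi$ using \eqref{eq:2}--\eqref{eq:3} for all $\alpha$; this step is only implicit in your sketch.
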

\noindent
Since the inclusion $S^m_{M,\delta}\subset B^{r,M}_{\infty,\infty}S^m_{M,\delta}$ is true for all $r>0$, a straightforward consequence of Theorem \ref{sobolev-holder-cont} is the following
\begin{cor}\label{smooth-cont}
If $a\in S^m_{M,\delta}$, for $m\in\mathbb{R}$ and $\delta\in[0,1[$, then \eqref{boundedness2} is true for all $s\in\mathbb{R}$. If $\delta=1$, \eqref{boundedness2} is true for all $s>0$.
\end{cor}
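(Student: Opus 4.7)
The plan is to derive Corollary \ref{smooth-cont} directly from Theorem \ref{sobolev-holder-cont} by exploiting the inclusion $S^m_{M,\delta}\subset B^{r,M}_{\infty,\infty}S^m_{M,\delta}$ (valid for every $r>0$) and then choosing the auxiliary parameter $r$ large enough, as a function of the target smoothness $s$, so that $s$ falls inside the range of admissible indices of the theorem.

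First I would verify the inclusion. Fix $a\in S^m_{M,\delta}$ and $\alpha\in\mathbb Z^n_+$, and for frozen $\xi\in\mathbb R^n$ set $b_\xi(x):=\partial^\alpha_\xi a(x,\xi)$. From \eqref{eq:1},
\[\|\partial^\beta b_\xi\|_{L^\infty}\le C_{\alpha,\beta}\langle\xi\rangle_M^{m-\alpha\cdot 1/M+\delta\beta\cdot 1/M}\quad\text{for every }\beta\in\mathbb Z^n_+.\]
For the dyadic blocks $(b_\xi)_h=\varphi_h(D)b_\xi$, the scaling $\mathcal F^{-1}\varphi_h(x)=2^{h/|M|}(\mathcal F^{-1}\varphi_0)(2^{h/M}x)$ and Young's inequality give the trivial estimate $\|(b_\xi)_h\|_{L^\infty}\le C\langle\xi\rangle_M^{m-\alpha\cdot 1/M}$, while \eqref{SC5TER}, applied to $\{(b_\xi)_h\}$ together with the fact that $\varphi_h(D)$ commutes with $\partial^\beta$, yields for $h\ge 0$ and every nonnegative integer $k$
\[\|(b_\xi)_h\|_{L^\infty}\le C_k2^{-hk}\sum_{\beta\cdot 1/M=k}\|\partial^\beta b_\xi\|_{L^\infty}\le C_k2^{-hk}\langle\xi\rangle_M^{m-\alpha\cdot 1/M+\delta k}.\]
Splitting the range of $h$ into $2^h\le\langle\xi\rangle_M^\delta$ (where the trivial bound is enough) and $2^h>\langle\xi\rangle_M^\delta$ (where one picks any integer $k>r$), both cases produce $2^{rh}\|(b_\xi)_h\|_{L^\infty}\le C_{r,\alpha}\langle\xi\rangle_M^{m-\alpha\cdot 1/M+\delta r}$; taking $\sup_{h\ge-1}$ gives \eqref{eq:3}, and \eqref{eq:2} is already built into the definition of $S^m_{M,\delta}$. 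Hence $a\in B^{r,M}_{\infty,\infty}S^m_{M,\delta}$.

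To conclude, I pick $r$ inside the admissible interval of Theorem \ref{sobolev-holder-cont}. When $\delta\in[0,1[$ and $s\in\mathbb R$ is arbitrary, any positive $r$ with $r>\max\{s,-s/(1-\delta)\}$ satisfies $(\delta-1)r<s<r$, so Theorem \ref{sobolev-holder-cont} applies and yields the continuity of $a(x,D):B^{s+m,M}_{\infty,\infty}\to B^{s,M}_{\infty,\infty}$. When $\delta=1$ and $s>0$, any $r>s$ gives $(\delta-1)r=0<s<r$, and the same theorem applies. The only mildly technical step is the inclusion, whose verification reduces to balancing the trivial $L^\infty$ estimate on the dyadic blocks against the derivative-based estimate obtained from \eqref{SC5TER}; once this is in place the corollary is a matter of bookkeeping on the parameters.
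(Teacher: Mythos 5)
Your proposal is correct and follows exactly the route the paper intends: the paper derives the corollary from Theorem \ref{sobolev-holder-cont} via the inclusion $S^m_{M,\delta}\subset B^{r,M}_{\infty,\infty}S^m_{M,\delta}$ for all $r>0$, with $r$ then taken large enough relative to $s$. You have merely filled in the (correct) verification of that inclusion, which the paper leaves unproved, and the parameter bookkeeping matches the admissible range $(\delta-1)r<s\le r$ (with $s=r$ excluded when $\delta=1$).
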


\section{Microlocal properties}\label{mcl}
In this section we review some known microlocal tools and properties concerning the pseudodifferential operators introduced above. For the proofs of the results collected below, the reader is addressed to \cite{GM4}. In the sequel, we will set $T^\circ\mathbb R^n:=\mathbb R^n\times(\mathbb R^n\setminus\{0\})$, and $M=(m_1,\dots, m_n)$ will be a vector under the assumptions of \S 2.
\begin{defn}\label{Mcone}
We say that a set $\Gamma_M\subseteq\mathbb{R}^n\setminus\{0\}$ is $M-$conic, if
$$
\xi\in\Gamma_M\quad\Rightarrow\quad t^{1/M}\xi\in\Gamma_M\,\,,\,\,\forall\,t>0\,.
$$
\end{defn}
\begin{defn}\label{Melliptic}
A symbol $a\in S^m_{M,\delta}$ is microlocally $M-$elliptic at $(x_0,\xi_0)\in T^{\circ}\mathbb{R}^n$ if there exist an open neighborhood $U$ of $x_0$ and an $M-$conic open neighborhood $\Gamma_M$ of $\xi_0$ such that for $c_0>0$, $\rho_0>0$:
\begin{equation}\label{Mellipticity}
|a(x,\xi)|\ge c_0\langle\xi\rangle_M^m\,,\quad\forall\,(x,\xi)\in U\times\Gamma_M\,,\quad |\xi|_M>\rho_0\,.
\end{equation}
Moreover the characteristic set of $a\in S^{m}_{M,\delta}$ is ${\rm Char}(a)\subset T^{\circ}\mathbb{R}^n$ defined by
\begin{equation}
(x_0,\xi_0)\in T^{\circ}\mathbb{R}^n\setminus{\rm Char}(a)\,\,\Leftrightarrow\,\,\,\,a\,\, \text {is\,\,microlocally\,\,\emph{M-}elliptic\,\,at}\,\,(x_0,\xi_0)\,.
\end{equation}
\end{defn}
\begin{defn}\label{reg_symbol}
We say that $a\in \mathcal S'(\mathbb R^n)$ is {\it microlocally regularizing} on $U\times \Gamma_M$ if $a_{|\,\,U\times\Gamma_M}\in C^{\infty}(U\times\Gamma_M)$ and for every $m>0$ and all $\alpha,\beta\in\mathbb{Z}^n_+$ a positive constant $C_{m,\alpha,\beta}>0$ exists in such a way that:
\begin{equation}\label{microregineq}
|\partial^{\alpha}_{\xi}\partial^{\beta}_x a(x,\xi)|\le C_{m,\alpha,\beta}(1+|\xi|)^{-m}\,,\quad\forall\,(x,\xi)\in U\times\Gamma_M\,.
\end{equation}
\end{defn}
\begin{prop}{\bf (Microlocal parametrix).}\label{microparametrix}
Assume that $0\le\delta<1/m^*$. Then $a\in S^m_{M,\delta}$ is microlocally $M-$elliptic at $(x_0,\xi_0)\in T^{\circ}\mathbb{R}^n$ if and only if there exist symbols ${b},{c}\in S^{-m}_{M,\delta}$ such that
\begin{equation}\label{leftrightinverse}
a(x,D)b(x,D)=I+r(x,D)\qquad\mbox{and}\qquad c(x,D)a(x,D)=I+l(x,D)\,,
\end{equation}
being $I$ the identity operator and the symbols $r(x,\xi)$, $l(x,\xi)$ microlocally regularizing at $(x_0,\xi_0)$.
\end{prop}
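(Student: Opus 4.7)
The plan is to establish both implications via the classical parametrix construction, adapted to the quasi-homogeneous symbol classes $S^m_{M,\delta}$ through the symbolic calculus of \cite[Propositions 2.3--2.5]{GM4}; its availability is precisely what the hypothesis $\delta<1/m^\ast$ ensures. Throughout, the quantitative ingredient is Proposition~\ref{WPROP}\,iv), which governs derivatives of the quasi-homogeneous weight.

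For the direction \emph{parametrix $\Rightarrow$ microlocal ellipticity}, I would start from $a(x,D)b(x,D)=I+r(x,D)$ with $r$ microlocally regularizing at $(x_0,\xi_0)$ and apply the composition formula. This yields that $(a\# b)(x,\xi)-a(x,\xi)b(x,\xi)\in S^{-\varepsilon}_{M,\delta}$ with $\varepsilon=(1-\delta m^\ast)/m^\ast>0$. Microlocally at $(x_0,\xi_0)$, $(a\# b)(x,\xi)$ coincides with $1$ modulo a rapidly decreasing symbol. Consequently, on a small $M$-conic neighbourhood $U\times\Gamma_M$ of $(x_0,\xi_0)$ and for $\vert\xi\vert_M$ large one obtains $|a(x,\xi)\,b(x,\xi)|\ge 1/2$; combined with $|b(x,\xi)|\le C\langle\xi\rangle_M^{-m}$ this gives $|a(x,\xi)|\ge c_0\langle\xi\rangle_M^m$, i.e.\ the $M$-ellipticity of Definition~\ref{Melliptic}.

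For the main direction I would build $b$ as an asymptotic sum $b\sim\sum_{j\ge 0}b_j$ with $b_j\in S^{-m-j\varepsilon}_{M,\delta}$. Pick $\psi\in C^\infty_0(U)$ with $\psi\equiv 1$ on a smaller neighbourhood of $x_0$, and a cut-off $\chi(\xi)\in S^0_M$ equal to $1$ on a smaller $M$-conic neighbourhood $\Gamma_M'$ of $\xi_0$ for $\vert\xi\vert_M$ large and supported inside $\Gamma_M$ outside a small ball. The ellipticity estimate \eqref{Mellipticity} legitimates
\[
b_0(x,\xi):=\frac{\psi(x)\chi(\xi)}{a(x,\xi)},
\]
extended by zero where $\psi\chi=0$. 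A Fa\`a di Bruno calculation, combined with Proposition~\ref{WPROP}\,iv) and the symbol estimates \eqref{eq:1}, shows $b_0\in S^{-m}_{M,\delta}$. The composition formula then gives $a\# b_0=\psi\chi+r_0$ with $r_0\in S^{-\varepsilon}_{M,\delta}$, microlocally supported in $U\times\Gamma_M$. Inductively setting $b_{N+1}:=-b_0\,r_N$ yields $a\#(b_0+\dots+b_{N+1})=\psi\chi+r_{N+1}$ with $r_{N+1}\in S^{-(N+2)\varepsilon}_{M,\delta}$, and a Borel-type summation inside $S^{-m}_{M,\delta}$ produces a genuine symbol $b$ satisfying $a(x,D)b(x,D)=\psi(x)\chi(D)+r(x,D)$ with $r$ microlocally regularizing at $(x_0,\xi_0)$. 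Absorbing $I-\psi(x)\chi(D)$ (itself microlocally regularizing there) into $r$ yields the right parametrix; the same scheme applied to the formal transpose of $a(x,D)$ produces the left parametrix $c$.

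The main obstacle I anticipate is the quantitative control of the iterative gain: verifying that each $b_{N+1}$ really lies in $S^{-m-(N+1)\varepsilon}_{M,\delta}$, and that the microlocal support of every remainder $r_N$ stays inside $U\times\Gamma_M$ modulo symbols of order $-\infty$, so that the limiting remainder $r$ is genuinely microlocally regularizing at $(x_0,\xi_0)$. This is exactly where $\delta<1/m^\ast$ is indispensable: without it the composition formula would not deliver a strict decrease in order at each step, and the asymptotic construction would collapse.
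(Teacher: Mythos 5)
The paper offers no proof of this proposition: Section~\ref{mcl} explicitly defers all proofs to \cite{GM4}, so there is no in-house argument to compare against. Your plan is the standard microlocal parametrix construction for a $(\rho,\delta)$-type calculus, transplanted to $S^m_{M,\delta}$, which is what \cite{GM4} carries out; both directions are structured correctly (the converse via the leading term $a b$ of $a\# b$ together with $|b|\le C\langle\xi\rangle_M^{-m}$, the direct implication via $b_0=\psi\chi/a$ and a Neumann-type iteration).

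Two points of repair. First, the intermediate claim $a\#(b_0+\dots+b_{N+1})=\psi\chi+r_{N+1}$ with $r_{N+1}\in S^{-(N+2)\varepsilon}_{M,\delta}$ is not literally true: since the leading term of $a\# b_{N+1}=a\#(-b_0r_N)$ is $-ab_0r_N=-\psi\chi\, r_N$, one gets $r_{N+1}=(1-\psi\chi)r_N+(\text{terms in }S^{-(N+2)\varepsilon}_{M,\delta})$, and the piece $(1-\psi\chi)r_N$ gains no order globally. The fix is the one you half-identify as the ``main obstacle'': split each remainder into a part of strictly decreasing order (which drives the asymptotic summation) and a part that vanishes, for $|\xi|_M$ large, on the set where $\psi\chi\equiv1$; with nested cut-offs the accumulated pieces of the second kind all vanish on a fixed smaller set $U'\times\Gamma'_M$ and are therefore microlocally regularizing at $(x_0,\xi_0)$ in the sense of Definition~\ref{reg_symbol}. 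Note that the obstruction is the opposite of what you wrote: the remainders are \emph{not} microsupported inside $U\times\Gamma_M$; rather their non-negligible parts live \emph{outside} the region where $\psi\chi=1$, which is precisely why they are harmless at $(x_0,\xi_0)$. Second, a harmless quantitative slip: the $\alpha$-term of the composition expansion lies in $S^{m+m'-(1-\delta)\alpha\cdot\frac1M}_{M,\delta}$, so the gain per step is $(1-\delta)/m^*$ rather than your $\varepsilon=1/m^*-\delta$; since the latter is smaller your estimates survive, but its positivity is not where $\delta<1/m^*$ enters --- that hypothesis is needed for the symbolic calculus (composition, transpose, remainder control) of \cite[Propositions 2.3--2.5]{GM4} to hold at all, as recalled after Definition~\ref{def:1}.
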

\begin{defn}\label{microsobolev}
For $(x_0,\xi_0)\in T^{\circ}\mathbb{R}^n$, $s\in \mathbb R$, we define $mcl B^{s,M}_{\infty,\infty}(x_0,\xi_0)$ as the set of   $u\in\mathcal{S}^{\prime}(\mathbb{R}^n)$   such that:
\begin{equation}\label{microsobolevprop}
\psi(D)(\phi u)\in B^{s,M}_{\infty,\infty}\,,
\end{equation}
where $\phi\in C^{\infty}_0(\mathbb{R}^n)$ is identically one in a neighborhood of $x_0$, $\psi(\xi)\in S^0_{M}$ is a  symbol identically one on $\Gamma_M\cap\{|\xi|_M>\varepsilon_0\}$, for $0<\varepsilon_0<\vert \xi_0\vert_M$, and finally $\Gamma_M\subset\mathbb{R}^n\setminus\{0\}$ is an $M-$conic neighborhood  of $\xi_0$.\\
Under the same assumptions, we also write
$$
(x_0,\xi_0)\notin WF_{B^{s,M}_{\infty,\infty}}(u)\,.
$$
The set $WF_{B^{s, M}_{\infty,\infty}}(u)\subset T^{\circ}\mathbb{R}^n$ is called the $B^{s, M}_{\infty,\infty}-${\it wave front set} of $u$.
\newline
We say that a distribution satisfying the previous definition is {\it microlocally in $B^{s,M}_{\infty,\infty}$ at $(x_0,\xi_0)$}. Moreover the closed set $WF_{B^{s, M}_{\infty,\infty}}(u)$ is $M-conic$ in the $\xi$ variable.\\

Finally we say that   $x_0\notin B^{s, M}_{\infty,\infty}-{\rm singsupp}\,(u)$ if and only if there exists a function $\phi\in C^{\infty}_0(\mathbb{R}^n)$, $\phi\equiv 1$ in some open neighborhood of $x_0$, such that $\phi u\in B^{s, M}_{\infty,\infty}$.
\end{defn}
\begin{prop}\label{multiplication}
If $u\in mcl B^{s, M}_{\infty,\infty}(x_0,\xi_0)$, with $(x_0,\xi_0)\in T^{\circ}\mathbb R^n$, then for any $\varphi\in C^\infty_0(\mathbb R^n)$, such that $\varphi(x_0)\neq 0$, $\varphi u\in mcl B^{s, M}_{\infty,\infty} (x_0,\xi_0)$ .
\end{prop}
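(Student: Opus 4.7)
The plan is to verify Definition \ref{microsobolev} for $\varphi u$ by constructing appropriate new cutoffs from those certifying the property for $u$. Let $\phi\in C_0^\infty$, $\psi\in S^0_M$, $\Gamma_M$ and $\varepsilon_0$ be the data witnessing $u\in mcl B^{s,M}_{\infty,\infty}(x_0,\xi_0)$. I pick $\tilde\phi\in C_0^\infty$ identically $1$ near $x_0$ with $\textup{supp}\,\tilde\phi\subset\{\phi\equiv 1\}$, an $M$-conic neighborhood $\tilde\Gamma_M$ of $\xi_0$ whose intersection with $\{|\xi|_M\geq 1\}$ has closure inside $\Gamma_M$, and a symbol $\tilde\psi\in S^0_M$ identically $1$ on $\tilde\Gamma_M\cap\{|\xi|_M>\tilde\varepsilon_0\}$ whose support at high frequencies lies entirely in $\Gamma_M\cap\{|\xi|_M>\varepsilon_0\}$. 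This ensures that $\tilde\psi(\xi)(1-\psi(\xi))$ and all its $\xi$-derivatives vanish outside a compact set.

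Since $\phi\equiv 1$ on $\textup{supp}\,\tilde\phi$, we have $\tilde\phi\cdot(\varphi u)=(\tilde\phi\varphi)(\phi u)$. Decomposing $\phi u=\psi(D)(\phi u)+(1-\psi(D))(\phi u)=:v+w$ with $v\in B^{s,M}_{\infty,\infty}$ by hypothesis, I split
\[
\tilde\psi(D)[\tilde\phi\,\varphi u]=\tilde\psi(D)[(\tilde\phi\varphi)\,v]+\tilde\psi(D)[(\tilde\phi\varphi)\,w].
\]
The first summand is immediate: $\tilde\phi\varphi$ viewed as an $x$-dependent symbol lies in $S^0_M$, so multiplication by it—like $\tilde\psi(D)\in{\rm Op}\,S^0_M$—acts boundedly on $B^{s,M}_{\infty,\infty}$ by Corollary \ref{smooth-cont}, while $v$ already lies there.

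The heart of the argument is showing that the operator $T:=\tilde\psi(D)\circ(\tilde\phi\varphi\,\cdot)\circ(1-\psi(D))$ is smoothing. By the symbolic calculus of \cite{GM4} (applicable since $\delta=0<1/m^{\ast}$) and the fact that $1-\psi$ is independent of $x$, the full symbol of $T$ admits the asymptotic expansion
\[
\sigma(T)(x,\xi)\sim\sum_{\alpha}\frac{1}{\alpha!}\,\partial_\xi^\alpha\tilde\psi(\xi)\,(1-\psi(\xi))\,D_x^\alpha(\tilde\phi\varphi)(x).
\]
Our support choice makes each coefficient $\partial_\xi^\alpha\tilde\psi\cdot(1-\psi)$ compactly supported in $\xi$, so every term and the remainder after any truncation belong to $S^{-\infty}_M$. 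Hence $T$ is genuinely regularizing; since $\phi u$ is a compactly supported distribution it lies in $B^{t,M}_{\infty,\infty}$ for some $t\in\mathbb R$, and consequently $T(\phi u)\in B^{s,M}_{\infty,\infty}$. Combining the two pieces gives $\tilde\psi(D)(\tilde\phi\cdot\varphi u)\in B^{s,M}_{\infty,\infty}$, which is the required conclusion $\varphi u\in mcl B^{s,M}_{\infty,\infty}(x_0,\xi_0)$.

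The main technical obstacle is the microlocal design of $\tilde\psi$: its support must be forced strictly inside the conic region where $\psi\equiv 1$ at large $|\xi|_M$, so that the cancellation $\tilde\psi(1-\psi)\equiv 0$ at high frequencies persists for every $\xi$-derivative; without this refinement the composition symbol would only fall into some finite negative order class and $T$ would fail to be smoothing.
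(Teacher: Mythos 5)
The paper does not actually prove this proposition in the text --- it is quoted from \cite{GM4} --- but your argument is correct and is the standard one: shrink the spatial cutoff and the $M$-conic cutoff so that $\tilde\phi\,\varphi u=(\tilde\phi\varphi)(\phi u)$ and $\partial_\xi^\alpha\tilde\psi\cdot(1-\psi)$ is compactly supported, making the cross term $\tilde\psi(D)(\tilde\phi\varphi)(1-\psi(D))$ regularizing, while the main term is handled by $S^0_M$-boundedness on $B^{s,M}_{\infty,\infty}$. The only cosmetic slip is the assertion that the remainder after truncating the composition expansion lies in $S^{-\infty}_M$: the $N$-th remainder is only of order roughly $-N/m^*$, but since every explicit term is compactly supported in $\xi$ and $N$ is arbitrary, the full symbol of $T$ does lie in $S^{-\infty}_M$, as you conclude.
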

\begin{prop}\label{proiezione}
Let $\pi_1$ be the canonical projection of $T^{\circ}\mathbb{R}^n$ onto $\mathbb{R}^n$, $\pi_1(x,\xi)=x$. For every $u\in\mathcal{S}^{\prime}(\mathbb{R}^n)$ and $s\in\mathbb{R}$ we have:
$$
B^{s, M}_{\infty,\infty}-{\rm singsupp}(u)=\pi_1(WF_{B^{s, M}_{\infty,\infty}}(u))\,.
$$
\end{prop}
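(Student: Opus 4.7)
I will prove the two inclusions separately.

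For $\pi_1(WF_{B^{s,M}_{\infty,\infty}}(u)) \subseteq B^{s,M}_{\infty,\infty}-{\rm singsupp}\,(u)$, I argue by contrapositive. If $x_0 \notin B^{s,M}_{\infty,\infty}-{\rm singsupp}\,(u)$, pick $\phi \in C^{\infty}_0(\mathbb R^n)$ with $\phi \equiv 1$ near $x_0$ and $\phi u \in B^{s,M}_{\infty,\infty}$; for any $\xi_0 \neq 0$ and any $\psi \in S^{0}_{M}$ admissible in Definition \ref{microsobolev}, Corollary \ref{smooth-cont} applied to $\psi(D) \in {\rm Op}\,S^{0}_{M}$ gives $\psi(D)(\phi u) \in B^{s,M}_{\infty,\infty}$, so $(x_0, \xi_0) \notin WF_{B^{s,M}_{\infty,\infty}}(u)$.

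For the reverse inclusion, assume $(x_0, \xi) \notin WF_{B^{s,M}_{\infty,\infty}}(u)$ for every $\xi \neq 0$. For each $\xi$ in the quasi-homogeneous unit sphere $S_M := \{\eta \in \mathbb R^n\,;\,|\eta|_M = 1\}$, Definition \ref{microsobolev} furnishes $\phi_\xi \in C^{\infty}_0(\mathbb R^n)$ (with $\phi_\xi \equiv 1$ near $x_0$), an $M$-conic open neighborhood $\Gamma_\xi$ of $\xi$, a threshold $\epsilon_\xi > 0$, and $\psi_\xi \in S^{0}_{M}$ identically one on $\Gamma_\xi \cap \{|\eta|_M > \epsilon_\xi\}$, such that $\psi_\xi(D)(\phi_\xi u) \in B^{s,M}_{\infty,\infty}$. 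Since $|\cdot|_M$ is proper, $S_M$ is compact in $\mathbb R^n$, hence finitely many $\Gamma_{\xi_1}, \ldots, \Gamma_{\xi_N}$ cover $S_M$, and by $M$-conicity they cover $\mathbb R^n \setminus \{0\}$.

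I then choose $\phi \in C^{\infty}_0(\mathbb R^n)$ equal to one near $x_0$ with support so small that $\phi_{\xi_j} \equiv 1$ on ${\rm supp}\,\phi$ for all $j$ (so $\phi\phi_{\xi_j} = \phi$), together with a quasi-homogeneous conic partition of unity $1 = \chi_0(\xi) + \sum_{j=1}^{N} \chi_j(\xi)$, where $\chi_0 \in C^{\infty}_0(\mathbb R^n)$ and each $\chi_j \in S^{0}_{M}$ is supported in $\Gamma_{\xi_j} \cap \{|\eta|_M > R\}$ for some $R > \max_j \epsilon_{\xi_j}$. The support condition forces $\chi_j\psi_{\xi_j} \equiv \chi_j$; since both symbols depend only on $\xi$, the operator identity $\chi_j(D)\psi_{\xi_j}(D) = \chi_j(D)$ holds exactly. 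Decomposing
\begin{equation*}
\phi u = \chi_0(D)(\phi u) + \sum_{j=1}^{N}\chi_j(D)\psi_{\xi_j}(D)(\phi u),
\end{equation*}
the low-frequency term $\chi_0(D)(\phi u)$ is Schwartz (convolution of $\mathcal F^{-1}\chi_0 \in \mathcal S$ with $\phi u \in \mathcal E'$), while for each $j$ I split
\begin{equation*}
\psi_{\xi_j}(D)(\phi u) = \psi_{\xi_j}(D)(\phi\phi_{\xi_j} u) = \phi\cdot\psi_{\xi_j}(D)(\phi_{\xi_j}u) + [\psi_{\xi_j}(D),\phi](\phi_{\xi_j}u).
\end{equation*}
The first piece belongs to $B^{s,M}_{\infty,\infty}$ by hypothesis and Corollary \ref{smooth-cont} (multiplication by $\phi \in S^{0}_{M}$ is bounded there). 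The commutator $[\psi_{\xi_j}(D),\phi]$ admits, via the asymptotic expansion from the quasi-homogeneous symbolic calculus of \cite{GM4}, a representation of the form $T_N + R_N$ with $T_N \in {\rm Op}\,S^{-N/m^*}_{M}$ for any prescribed $N$, plus a smoothing remainder $R_N$; applied to $\phi_{\xi_j}u \in \mathcal E'(\mathbb R^n)$, which lies in some $B^{-N',M}_{\infty,\infty}$, it therefore produces a distribution in $B^{s,M}_{\infty,\infty}$ provided $N$ is large enough. Applying $\chi_j(D) \in {\rm Op}\,S^{0}_{M}$ preserves this regularity, yielding $\phi u \in B^{s,M}_{\infty,\infty}$ and hence $x_0 \notin B^{s,M}_{\infty,\infty}-{\rm singsupp}\,(u)$.

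The main obstacle is the setup of the $M$-conic partition $(\chi_j)$ with the exact algebraic identity $\chi_j \psi_{\xi_j} = \chi_j$ (to avoid spurious remainders from the symbol product) together with the commutator analysis, which must exploit the full asymptotic expansion of the quasi-homogeneous symbolic calculus of \cite{GM4} and the embedding of $\mathcal E'(\mathbb R^n)$ into some $B^{-N',M}_{\infty,\infty}$.
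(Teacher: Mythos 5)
The paper itself offers no proof of this proposition (it defers to \cite{GM4}), so your argument must stand on its own. The easy inclusion is fine, and the architecture of the hard one --- compactness of $S_M=\{|\eta|_M=1\}$, a finite $M$-conic cover, an $M$-conic partition of unity with the exact identity $\chi_j\psi_{\xi_j}=\chi_j$, and the cutoff exchange $\psi_{\xi_j}(D)(\phi u)=\phi\,\psi_{\xi_j}(D)(\phi_{\xi_j}u)+[\psi_{\xi_j}(D),\phi](\phi_{\xi_j}u)$ --- is the standard and correct route. The gap is in the commutator term. Your claim that $[\psi_{\xi_j}(D),\phi]=T_N+R_N$ with $T_N\in{\rm Op}\,S^{-N/m^*}_M$ for arbitrary $N$ is false: the symbol of this commutator has asymptotic expansion $\sum_{\alpha\neq0}\frac{1}{\alpha!}\partial^\alpha_\xi\psi_{\xi_j}(\xi)\,D^\alpha_x\phi(x)$, whose leading part lies in $S^{-1/m^*}_M$ and in general in no better class; only the \emph{remainder} of the expansion becomes arbitrarily negative. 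Hence $[\psi_{\xi_j}(D),\phi](\phi_{\xi_j}u)$ is only shown to lie in $B^{-N'+1/m^*,M}_{\infty,\infty}$, which is not contained in $B^{s,M}_{\infty,\infty}$ since the a priori exponent $-N'$ of $\phi_{\xi_j}u\in\mathcal E'$ may be very negative. As written, the proof does not close.

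The repair lives entirely inside your own setup and uses the operator $\chi_j(D)$, which you invoke only to ``preserve'' regularity: it must be composed with the commutator \emph{before} estimating. From $\chi_j\psi_{\xi_j}=\chi_j$ one gets the exact identity
\begin{equation*}
\chi_j(D)\,[\psi_{\xi_j}(D),\phi]=\chi_j(D)\,\phi\,\bigl(I-\psi_{\xi_j}(D)\bigr),
\end{equation*}
and every term of the asymptotic expansion of the symbol of the right-hand side carries a factor $\partial^\alpha\chi_j(\xi)\bigl(1-\psi_{\xi_j}(\xi)\bigr)$, which vanishes identically because ${\rm supp}\,\chi_j\subset\Gamma_{\xi_j}\cap\{|\xi|_M>R\}$ is contained in the open set where $\psi_{\xi_j}\equiv1$. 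By the symbolic calculus of \cite{GM4} (applicable here since all symbols have $\delta=0<1/m^*$) this composition therefore belongs to ${\rm Op}\,S^{-N}_M$ for every $N$, i.e.\ it is genuinely regularizing, and applied to $\phi_{\xi_j}u\in B^{-N',M}_{\infty,\infty}$ it lands in $B^{s,M}_{\infty,\infty}$. (An alternative fix is a bootstrap gaining $1/m^*$ per step with nested cutoffs, but the microsupport cancellation above is cleaner and requires no iteration.) With this correction the remaining steps --- the Schwartz low-frequency term, the embedding $\mathcal E'\subset\bigcup_{N'}B^{-N',M}_{\infty,\infty}$, and the boundedness results of Corollary \ref{smooth-cont} --- are used correctly and the proof is complete.
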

\begin{thm}\label{microsobolevaction}
Let $a\in S^m_{M,\delta}$ for $\delta\in[0,1/m^*[$, $m\in\mathbb R$ and $(x_0,\xi_0)\in T^{\circ}\mathbb{R}^n$. Then for all $s\in\mathbb{R}$
\begin{equation}\label{microcont}
u\in mcl B^{s+m, M}_{\infty, \infty}(x_0,\xi_0)\quad\Rightarrow\quad a(x,D)u\in mcl B^{s, M}_{\infty, \infty}(x_0,\xi_0)\,.
\end{equation}
\end{thm}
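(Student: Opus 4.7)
The plan is to unfold the definition of microlocal membership and reduce the problem to the global mapping property of Corollary \ref{smooth-cont}. I must exhibit cutoffs $\phi \in C^\infty_0(\mathbb{R}^n)$ identically $1$ near $x_0$ and $\psi \in S^0_M$ identically $1$ on $\Gamma \cap \{\vert\xi\vert_M > \varepsilon_0\}$ for some $M$-conic neighborhood $\Gamma$ of $\xi_0$, such that $\psi(D)(\phi\, a(x,D) u) \in B^{s,M}_{\infty,\infty}$. By the hypothesis $u \in mcl B^{s+m,M}_{\infty,\infty}(x_0,\xi_0)$, I pick ``larger'' data $\phi_1, \psi_1, \Gamma_1, \varepsilon_1$ for which $v := \psi_1(D)(\phi_1 u) \in B^{s+m,M}_{\infty,\infty}$, arranging the nesting so that $\phi_1 \equiv 1$ on a neighborhood of $\textup{supp}\,\phi$ and $\textup{supp}\,\psi \cap \{\vert\xi\vert_M \ge R\} \subset \Gamma_1 \cap \{\psi_1 \equiv 1\}$ for some $R > 0$. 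This last choice is crucial: it makes $\psi(\xi)(1-\psi_1(\xi))$ compactly supported in $\xi$.

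Next, decompose $u = v + w_1 + w_2$ with $w_1 := (1-\psi_1(D))(\phi_1 u)$ and $w_2 := (1-\phi_1)u$, so that
\[
\psi(D)\phi\, a(x,D) u = \psi(D)\phi\, a(x,D) v + \psi(D)\phi\, a(x,D) w_1 + \psi(D)\phi\, a(x,D) w_2.
\]
The first summand lies in $B^{s,M}_{\infty,\infty}$: by the quasi-homogeneous symbolic calculus of \cite{GM4} (available since $\delta < 1/m^*$), the operator $\psi(D)\phi\, a(x,D)$ belongs to $\mathrm{Op}\,S^m_{M,\delta}$, and Corollary \ref{smooth-cont} maps $v \in B^{s+m,M}_{\infty,\infty}$ into $B^{s,M}_{\infty,\infty}$.

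The $w_2$ term is controlled by pseudolocality of operators in $\mathrm{Op}\,S^m_{M,\delta}$: the Schwartz kernel of $\phi\cdot a(x,D)\cdot(1-\phi_1)$ factors as $\phi(x) K_a(x,y)(1-\phi_1(y))$ with $K_a$ smooth off the diagonal; since $\phi$ and $1-\phi_1$ have supports separated by a positive distance, this kernel is globally smooth and rapidly decreasing in $\vert x-y\vert$. Hence this operator sends $\mathcal{S}'(\mathbb{R}^n)$ into $\mathcal{S}(\mathbb{R}^n)$, and $\psi(D)$ preserves $\mathcal{S}\subset B^{s,M}_{\infty,\infty}$. For $w_1$, I would show that the whole composition $\psi(D)\phi\, a(x,D)(1-\psi_1(D))$ is regularizing. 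Since $\psi_1$ is independent of $x$ one has $\phi\, a(x,D)(1-\psi_1(D)) = b(x,D)$ exactly, with $b(x,\xi) = \phi(x) a(x,\xi)(1-\psi_1(\xi))$; composition with $\psi(D)$ on the left (again $\psi$ independent of $x$) yields, via the symbolic calculus, a full symbol
\[
\sigma(x,\xi) \sim \sum_{\alpha} \frac{(-i)^{\vert\alpha\vert}}{\alpha!} \partial_\xi^\alpha \psi(\xi)\, D_x^\alpha b(x,\xi).
\]
Every term carries the factor $\partial_\xi^\alpha \psi(\xi)(1-\psi_1(\xi))$, whose $\xi$-support lies in $\textup{supp}\,\psi \cap \textup{supp}(1-\psi_1) \subset \{\vert\xi\vert_M < R\}$ by the nested cutoff choice, while its $x$-support lies in $\textup{supp}\,\phi$. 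Thus each term is in $S^{-\infty}_{M,\delta}$, so $\sigma$ can be chosen in $S^{-\infty}_{M,\delta}$; consequently $\psi(D)\phi\, a(x,D)(1-\psi_1(D))$ is regularizing and, applied to $\phi_1 u \in \mathcal{E}'(\mathbb{R}^n)$, produces an element of $\mathcal{S}(\mathbb{R}^n) \subset B^{s,M}_{\infty,\infty}$.

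The main technical step is the last one, namely verifying that every term in the asymptotic composition of $\psi(D)\phi\, a(x,D)(1-\psi_1(D))$ lies in $S^{-\infty}$. This rests on $\psi$ and $\psi_1$ being pure Fourier multipliers, so that the asymptotic expansion consists of $\xi$-derivatives of these factors against $x$-derivatives of $\phi\, a$ without any mixing that could enlarge $\xi$-supports, together with the nested choice of cutoffs making $\textup{supp}\,\psi \cap \textup{supp}(1-\psi_1)$ bounded in $\xi$. The hypothesis $\delta < 1/m^*$ enters precisely here, to ensure that the symbolic calculus of \cite{GM4} is available with a controlled asymptotic remainder in $S^{-\infty}$.
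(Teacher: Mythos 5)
The paper itself does not prove this theorem: Section \ref{mcl} explicitly defers all proofs to \cite{GM4}, so there is no in-paper argument to compare against. Your proof is the standard microlocalization argument and is essentially correct: the nested cutoffs making $\psi(\xi)(1-\psi_1(\xi))$ compactly supported, the exactness of composition with Fourier multipliers on the appropriate side, pseudolocality of ${\rm Op}\,S^m_{M,\delta}$ for the spatial cutoff, and the restriction $\delta<1/m^*$ as the condition licensing the symbolic calculus of \cite{GM4} are exactly the right ingredients, and the decomposition $u=v+w_1+w_2$ correctly isolates the three effects. Two minor imprecisions, neither fatal: (i) Definition \ref{microsobolev} only requires $\psi\equiv 1$ on $\Gamma_M\cap\{|\xi|_M>\varepsilon_0\}$ and says nothing about $\textup{supp}\,\psi$, so you must (and implicitly do) additionally choose $\psi$ supported in an $M$-conic neighborhood compactly contained in $\Gamma_1$ away from small $|\xi|_M$; without that extra choice the nesting condition $\textup{supp}\,\psi\cap\{|\xi|_M\ge R\}\subset\{\psi_1\equiv 1\}$ cannot be arranged. (ii) The regularizing operator handling $w_1$ has symbol in $S^{-\infty}_{M,\delta}$ but no decay in $x$, so applied to $\phi_1u\in\mathcal E'(\mathbb R^n)$ it produces an element of $C^\infty$ with all derivatives bounded rather than of $\mathcal S(\mathbb R^n)$; this still suffices, since that space embeds in every $B^{s,M}_{\infty,\infty}$.
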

\begin{thm}\label{singularities}
Let $a\in S^m_{M,\delta}$, for $m\in\mathbb{R}$, $\delta\in[0,1/m^*[$, be microlocally $M-$elliptic at $(x_0,\xi_0)\in T^{\circ}\mathbb{R}^n$. For $s\in\mathbb{R}$ assume that $u\in\mathcal{S}^{\prime}(\mathbb{R}^n)$ fulfills $a(x,D)u\in mcl B^{s, M}_{\infty, \infty}(x_0,\xi_0)$. Then $u\in mcl B^{s+m, M}_{\infty, \infty}(x_0,\xi_0)$.
\end{thm}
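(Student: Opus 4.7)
The plan is to run the standard elliptic regularity argument in the microlocal quasi-homogeneous Besov setting, using the parametrix from Proposition \ref{microparametrix}. Since $\delta<1/m^*$ and $a$ is microlocally $M$-elliptic at $(x_0,\xi_0)$, that proposition supplies a left parametrix $c\in S^{-m}_{M,\delta}$ with
\[
c(x,D)\,a(x,D)=I+l(x,D),
\]
where $l(x,\xi)$ is microlocally regularizing at $(x_0,\xi_0)$ in the sense of Definition \ref{reg_symbol}. Applying both sides to $u$ yields
\[
u=c(x,D)\,a(x,D)u-l(x,D)u,
\]
so it suffices to localize each of the two right-hand terms microlocally at $(x_0,\xi_0)$.

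For the first piece, the hypothesis gives $a(x,D)u\in mcl B^{s,M}_{\infty,\infty}(x_0,\xi_0)$, and since $c\in S^{-m}_{M,\delta}$ with $\delta<1/m^*$, Theorem \ref{microsobolevaction} delivers exactly $c(x,D)a(x,D)u\in mcl B^{s+m,M}_{\infty,\infty}(x_0,\xi_0)$. Hence the whole content of the argument reduces to showing that $l(x,D)u\in mcl B^{t,M}_{\infty,\infty}(x_0,\xi_0)$ for every $t\in\mathbb R$, in particular for $t=s+m$, despite $u$ being only a tempered distribution.

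To handle $l(x,D)u$, I would fix an open neighborhood $U$ of $x_0$ and an $M$-conic neighborhood $\Gamma_M$ of $\xi_0$ on which $l$ satisfies the decay estimates \eqref{microregineq}, and then choose $\phi\in C^\infty_0(\mathbb R^n)$ and $\psi\in S^0_M$ of the type required by Definition \ref{microsobolev}, with $\textup{supp}\,\phi$ tightly concentrated around $x_0$ inside $U$ and the essential support of $\psi$ contained in a narrower $M$-cone around $\xi_0$ inside $\Gamma_M$. The aim is to verify that $\psi(D)(\phi\cdot l(x,D)u)\in B^{t,M}_{\infty,\infty}$. Writing $l=l_1+l_2$ with $l_1$ supported in $U\times\Gamma_M$ (hence globally smoothing by Definition \ref{reg_symbol}, so $l_1(x,D)u\in B^{t,M}_{\infty,\infty}$ for every $t$), and $l_2$ vanishing on a neighborhood of $(x_0,\xi_0)$, I would then exploit that in the composition $\psi(D)\,\phi\,l_2(x,D)$ the essential supports of $\psi$ in $\xi$ and of $l_2$ in $(x,\xi)$ are disjoint from $(x_0,\xi_0)$. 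A standard non-stationary integration-by-parts applied to the oscillatory integral defining this composition yields a kernel smoothing of arbitrary order, and the corresponding operator sends $\mathcal S^\prime(\mathbb R^n)$ into every $B^{t,M}_{\infty,\infty}$.

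The main obstacle is precisely the microlocal nature of the regularization of $l$: one cannot directly appeal to Theorem \ref{sobolev-holder-cont} because $l$ is not globally a symbol of arbitrarily negative order and $u$ is an arbitrary tempered distribution. The argument must combine the spatial cutoff $\phi$ with the frequency cutoff $\psi$ and use the symbolic calculus from \cite{GM4}, available under $\delta<1/m^*$, to turn the composition $\psi(D)\,\phi\,l(x,D)$ into an operator with a smoothing kernel, modulo contributions whose essential supports avoid $(x_0,\xi_0)$. Once this localization step is carried out, the desired conclusion $u\in mcl B^{s+m,M}_{\infty,\infty}(x_0,\xi_0)$ is immediate.
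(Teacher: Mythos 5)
The paper itself does not prove Theorem \ref{singularities}: the opening of Section \ref{mcl} defers all proofs of that section to \cite{GM4}. Your argument --- invert $a$ microlocally via Proposition \ref{microparametrix}, write $u=c(x,D)a(x,D)u-l(x,D)u$, handle the first term with Theorem \ref{microsobolevaction} (the bookkeeping $c\in S^{-m}_{M,\delta}$, $a(x,D)u\in mcl B^{s,M}_{\infty,\infty}(x_0,\xi_0)\Rightarrow c(x,D)a(x,D)u\in mcl B^{s+m,M}_{\infty,\infty}(x_0,\xi_0)$ is correct), and show the error term is microlocally negligible --- is the standard parametrix route and is surely the one taken in the reference.

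There is, however, one step that is false as literally stated, although it is repairable. You assert that $l_1(x,D)u\in B^{t,M}_{\infty,\infty}$ for every $t$ merely because $l_1$ is globally smoothing. For a general $u\in\mathcal S'(\mathbb R^n)$ this fails: an operator with symbol rapidly decaying in $\xi$ (with all derivatives) applied to a tempered distribution yields a smooth function whose derivatives may grow polynomially in $x$ (take $u(y)=\vert y\vert^{2k}$), so the output need not even be bounded. The same caveat concerns your closing claim that the operator obtained from the $l_2$-piece ``sends $\mathcal S'(\mathbb R^n)$ into every $B^{t,M}_{\infty,\infty}$'': what makes this true is not only that the composed symbol of $\psi(D)\,\phi\,l_2(x,D)$ has order $-\infty$, but that the compactly supported factor $\phi$ sitting in the middle forces that symbol, hence the Schwartz kernel, to decay rapidly in $x$ as well; only then does pairing with a fixed tempered distribution of finite order produce a bounded (indeed rapidly decreasing) function. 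The correct order of operations is therefore never to measure $l(x,D)u$ or $l_1(x,D)u$ globally, but only after inserting the cut-offs of Definition \ref{microsobolev}: $\phi\,l_1(x,D)u\in C^\infty_0(\mathbb R^n)\subset B^{t,M}_{\infty,\infty}$ and $\psi(D)$ preserves $B^{t,M}_{\infty,\infty}$ by Corollary \ref{smooth-cont}, while for the $l_2$-piece the disjoint-support argument must be applied to the full composition $\psi(D)\,\phi\,l_2(x,D)$, exactly as you sketch. With that adjustment, and the routine remark that $\phi$ and $\psi$ can be shrunk so that one pair works simultaneously for both terms of the parametrix identity, the proof closes.
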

\noindent
As a consequence of Theorems \ref{microsobolevaction}, \ref{singularities}, the following holds.
\begin{cor}\label{wavefront}
For $a\in S^m_{M,\delta}$, $m\in\mathbb{R}$, $\delta\in[0,1/m^*[$ and $u\in\mathcal{S}^{\prime}(\mathbb{R}^n)$, the inclusions
\begin{equation}\label{frontinclusions}
WF_{B^{s, M}_{\infty, \infty}}(a(x,D)u)\subset WF_{B^{s+m, M}_{\infty, \infty}}(u)\subset WF_{B^{s, M}_{\infty, \infty}}(a(x,D)u)\cup{\rm Char}(a)
\end{equation}
hold true for every $s\in\mathbb{R}$.
\end{cor}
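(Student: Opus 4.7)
The plan is to prove each of the two inclusions in \eqref{frontinclusions} by contraposition, reducing each containment to one of the two preceding theorems on microlocal action/regularity of operators in ${\rm Op}\,S^m_{M,\delta}$. Since $WF_{B^{s,M}_{\infty,\infty}}$ is defined via the complement relation $(x_0,\xi_0)\notin WF_{B^{s,M}_{\infty,\infty}}(v)\Leftrightarrow v\in mcl B^{s,M}_{\infty,\infty}(x_0,\xi_0)$ (Definition \ref{microsobolev}), a point lies outside a wave front set exactly when the corresponding microlocal Besov regularity holds, so the set inclusions translate into pointwise microlocal implications.

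For the first inclusion $WF_{B^{s,M}_{\infty,\infty}}(a(x,D)u)\subset WF_{B^{s+m,M}_{\infty,\infty}}(u)$, I would fix $(x_0,\xi_0)\notin WF_{B^{s+m,M}_{\infty,\infty}}(u)$, which means $u\in mcl B^{s+m,M}_{\infty,\infty}(x_0,\xi_0)$; then Theorem \ref{microsobolevaction} applied with the given $a\in S^m_{M,\delta}$ and with the shift of index $s+m\mapsto s$ yields $a(x,D)u\in mcl B^{s,M}_{\infty,\infty}(x_0,\xi_0)$, i.e.\ $(x_0,\xi_0)\notin WF_{B^{s,M}_{\infty,\infty}}(a(x,D)u)$.

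For the second inclusion $WF_{B^{s+m,M}_{\infty,\infty}}(u)\subset WF_{B^{s,M}_{\infty,\infty}}(a(x,D)u)\cup{\rm Char}(a)$, I would take any point $(x_0,\xi_0)$ in the complement of the right-hand side, so simultaneously $a(x,D)u\in mcl B^{s,M}_{\infty,\infty}(x_0,\xi_0)$ and $(x_0,\xi_0)\notin{\rm Char}(a)$; by Definition \ref{Melliptic} the latter condition means precisely that $a$ is microlocally $M$-elliptic at $(x_0,\xi_0)$, so Theorem \ref{singularities} applies and gives $u\in mcl B^{s+m,M}_{\infty,\infty}(x_0,\xi_0)$, hence $(x_0,\xi_0)\notin WF_{B^{s+m,M}_{\infty,\infty}}(u)$.

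No substantial obstacle is expected: all the heavy lifting (parametrix construction, propagation of microlocal Besov regularity, mapping properties of pseudodifferential operators with symbols in $S^m_{M,\delta}$ for $\delta\in[0,1/m^{*}[$) has already been carried out in Proposition \ref{microparametrix} and Theorems \ref{microsobolevaction}, \ref{singularities}. The only care point is to observe that the hypothesis $\delta<1/m^*$ needed in Theorem \ref{singularities} (through the microlocal parametrix) is inherited here, and that the $M$-conic structure of the wavefront set in the $\xi$ variable (mentioned after Definition \ref{microsobolev}) is compatible with the $M$-conic neighborhoods appearing in the ellipticity and microregularity definitions; both are automatic in the present setting.
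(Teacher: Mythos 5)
Your argument is correct and is exactly the derivation the paper intends: the first inclusion is the contrapositive form of Theorem \ref{microsobolevaction}, and the second is the contrapositive form of Theorem \ref{singularities} combined with the definition of ${\rm Char}(a)$. The paper itself states the corollary only as "a consequence of Theorems \ref{microsobolevaction}, \ref{singularities}" without writing out these two steps, so your proof fills in precisely the intended reasoning.
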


\section{Non regular symbols}\label{appl}
In this section, the microlocal regularity results discussed in \S \ref{mcl} are applied to obtain microlocal regularity results for a linear partial differential equation of quasi-homogeneous order $m\in\mathbb{N}$ of the form
\begin{equation}\label{APPL:1}
A(x,D)u:=\sum_{\alpha\cdot 1/M \leq m}a_\alpha(x) D^\alpha u=f(x)\,,
\end{equation}
where $D^{\alpha}:=(-i)^{|\alpha|}\partial^{\alpha}$ and the coefficients $a_{\alpha}$ belong to the Besov space $B^{r,M}_{\infty,\infty}$ of positive order $r$. It is clear that $A(x,\xi)=\sum_{\alpha\cdot 1/M \leq m}a_\alpha(x) \xi^\alpha\in B^{r, M}_{\infty, \infty}S^m_M$.
\newline
We assume that $A(x,D)$ is {\it microlocally} $M-$elliptic at a given point $(x_0,\xi_0)\in T^{\circ}\mathbb{R}^n$; according to Definition \ref{Melliptic} and the quasi-homogeneity of the norm $\vert \xi\vert_M$, this means that there exist an open neighborhood $U$ of $x_0$ and an open $M-$conic neighborhood $\Gamma_M$ of $\xi_0$ such that the $M-${\it principal symbol} of $A(x,D)$ satisfies
\begin{equation}\label{principal}
A_m(x,\xi)=\sum_{\alpha\cdot 1/M=m}a_\alpha(x)\xi^\alpha\neq 0, \quad \text{for}\, (x,\xi)\in U\times \Gamma_M.
\end{equation}
The forcing term $f$ is assumed to be in some $B^{s, M}_{\infty, \infty}$, with  a suitable order of smoothness $s$, {\it microlocally} at $(x_0,\xi_0)$ (cf. Definition \ref{microsobolev}).

\begin{thm}\label{APPLICATION}
Let $A(x,D)u=f$ be a linear partial differential equation, as in \eqref{APPL:1}, with coefficients in  the space $B^{r,M}_{\infty,\infty}$ of positive order $r$. Assume that $A(x,D)$ is microlocally $M$-elliptic at $(x_0,\xi_0)\in T^{\circ}\mathbb{R}^n$. If $f\in mcl B^{s-m,M}_{\infty,\infty}(x_0,\xi_0)$ and $u\in B^{s-\delta r,M}_{\infty,\infty}$, for $0<\delta<1/m^*$ and $(\delta-1)r+m<s\le r+m$, then $u\in mcl B^{s,M}_{\infty,\infty}(x_0,\xi_0)$.
\end{thm}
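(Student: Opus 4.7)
The plan is to combine a standard symbol smoothing of the rough operator $A(x,D)$ with the microlocal elliptic regularity Theorem~\ref{singularities} for smooth symbols. Since $A(x,\xi)\in B^{r,M}_{\infty,\infty}S^m_M$, I would first decompose, for the given $\delta\in(0,1/m^*)$,
\[
A(x,\xi)=A^\#(x,\xi)+A^\flat(x,\xi),
\]
where $A^\#\in S^m_{M,\delta}$ is obtained by a quasi-homogeneous low-pass filter in $x$ at scale $2^{h\delta}$ on the dyadic ring $\{\vert\xi\vert_M\sim 2^h\}$ (in the spirit of Meyer--Bony--Taylor), and the rough remainder $A^\flat$ belongs to $B^{r,M}_{\infty,\infty}S^{m-\delta r}_{M,\delta}$. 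The gain of $\delta r$ in the order of $A^\flat$ is exactly the conversion of the $B^{r,M}_{\infty,\infty}$ regularity of the coefficients into decay in $\xi$. The microlocal $M$-ellipticity of $A$ at $(x_0,\xi_0)$ is then inherited by $A^\#$, because on a possibly shrunk conic neighbourhood and for $\vert\xi\vert_M$ large,
\[
|A^\#(x,\xi)|\geq |A(x,\xi)|-|A^\flat(x,\xi)|\geq c_0\langle\xi\rangle_M^m-C\langle\xi\rangle_M^{m-\delta r}\geq \frac{c_0}{2}\langle\xi\rangle_M^m.
\]

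Next I would control the action of the rough part. Applying Theorem~\ref{sobolev-holder-cont} to $A^\flat\in B^{r,M}_{\infty,\infty}S^{m-\delta r}_{M,\delta}$ with index $t=s-m$, the hypothesis $(\delta-1)r+m<s\leq r+m$ places $t$ in the admissible range $((\delta-1)r,r]$ (the right endpoint being covered by $\delta<1$). Hence
\[
A^\flat(x,D):B^{s-\delta r,M}_{\infty,\infty}=B^{t+(m-\delta r),M}_{\infty,\infty}\to B^{t,M}_{\infty,\infty}=B^{s-m,M}_{\infty,\infty}
\]
is continuous, so from $u\in B^{s-\delta r,M}_{\infty,\infty}$ we deduce $A^\flat(x,D)u\in B^{s-m,M}_{\infty,\infty}$ globally, and \emph{a fortiori} microlocally at $(x_0,\xi_0)$.

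The equation then rewrites as
\[
A^\#(x,D)u=f-A^\flat(x,D)u,
\]
whose right-hand side lies in $mcl\, B^{s-m,M}_{\infty,\infty}(x_0,\xi_0)$: the first summand by hypothesis, the second by the previous paragraph. Since $A^\#\in S^m_{M,\delta}$ with $\delta<1/m^*$ is microlocally $M$-elliptic at $(x_0,\xi_0)$, Theorem~\ref{singularities} yields $u\in mcl\, B^{s,M}_{\infty,\infty}(x_0,\xi_0)$, which is the conclusion.

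I expect the main technical hurdle to be the symbol smoothing step: producing $A^\#$ and $A^\flat$ with the stated symbol estimates requires a quasi-homogeneous Littlewood--Paley decomposition of the coefficients calibrated by $\delta$, and a verification that the classical Bony--Meyer bounds extend to the anisotropic setting. Once this ingredient is in hand (it is essentially furnished by the machinery recalled from \cite{GM4}), the rest is a mechanical combination of the mapping property in Theorem~\ref{sobolev-holder-cont} with the elliptic microlocal regularity in Theorem~\ref{singularities}.
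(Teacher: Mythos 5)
Your proof is correct and follows essentially the same route as the paper's: the same symbol smoothing $A=A^{\#}+A^{\natural}$ from Proposition~\ref{pro:2}, the same application of Theorem~\ref{sobolev-holder-cont} to the remainder $A^{\natural}\in B^{r,M}_{\infty,\infty}S^{m-\delta r}_{M,\delta}$ with index $t=s-m$ (including the endpoint $t=r$ via $\delta<1$), and microlocal elliptic regularity for the smooth part. The only cosmetic differences are that you invoke Theorem~\ref{singularities} directly instead of writing out the parametrix identity $u=B(x,D)f-R(x,D)u-B(x,D)A^{\natural}(x,D)u$ as the paper does, and that you obtain the microlocal $M$-ellipticity of $A^{\#}$ from the pointwise bound $|A^{\natural}(x,\xi)|\le C\langle\xi\rangle_M^{m-\delta r}$ furnished by Proposition~\ref{pro:2}, a legitimate shortcut past the paper's Proposition~\ref{SHARPEL}.
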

\begin{rem}
{\rm Assuming in \eqref{APPL:1} $A(x,D)$ with coefficients in $B^{r,M}_{\infty,\infty}$, $r>0$, $u$ a priori in $B^{s-\delta r,M}_{\infty,\infty}$ for $(\delta-1)r+m<s\le r+m$, $\delta\in]0,1/m^*[$, we obtain
$$
\begin{array}{ll}
WF_{B^{s,M}_{\infty,\infty}}(u)\subset WF_{B^{s-m,M}_{\infty,\infty}}(A(x,D)u)\cup{\rm Char}(A)\,.
\end{array}
$$}
\end{rem}
\noindent
Following  \cite{Ta1}, \cite{GM3A}, non smooth symbols in $B^{r,M}_{\infty,\infty}S^m_M$ can be decomposed, for a given $\delta\in]0,1]$, into the sum of a smooth symbol in $S^m_{M,\delta}$ and a non smooth symbol of lower order. Namely,
let $\phi$ be a fixed $C^{\infty}$ function such that $\phi(\xi)=1$
for $\langle\xi\rangle_M\le 1$ and $\phi(\xi)=0$ for
$\langle\xi\rangle_M> 2$. For given $\varepsilon>0$ we set
 $\phi(\varepsilon^{1/M}\xi):=\phi(\varepsilon^{1/m_1}\xi_1,\dots,\varepsilon^{1/m_n}\xi_n)$.
\newline
Any symbol $a(x,\xi)\in B^{r,M}_{\infty,\infty}S^m_M$ can be split in
\begin{equation}\label{split}
a(x,\xi)= a^\#(x,\xi)+a^\natural(x,\xi),
\end{equation}
where  for some $\delta\in ]0,1]$
$$
a^{\#}(x,\xi):=\sum\limits_{h=-1}^{\infty}\phi(2^{- h\delta/M}D_x)a(x,\xi)\varphi_h(\xi).
$$
One can prove the following proposition (see \cite[Proposition 3.9]{GM3A} and \cite{Ta1} for the proof):
\begin{prop}\label{pro:2}
If $a(x,\xi)\in B^{r,M}_{\infty,\infty}S^{m}_M$, with $r>0$, $m\in\mathbb{R}$, and
$\delta\in]0,1]$, then $a^{\#}(x,\xi)\in S^m_{M,\delta}$ and $a^{\natural}(x,\xi)\in B^{r,M}_{\infty,\infty}S^{m-r\delta}_{M,\delta}$.
\end{prop}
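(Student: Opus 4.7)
The plan is to analyze the two pieces of the decomposition \eqref{split} separately, exploiting for the low-frequency cut-off operator $\phi(2^{-h\delta/M}D_x)$ in $x$ a standard dichotomy: it is bounded on $L^\infty$ and on $B^{r,M}_{\infty,\infty}$ uniformly in $h$, while $I-\phi(2^{-h\delta/M}D_x)$ essentially projects on $x$-frequencies with $|\eta|_M\gtrsim 2^{h\delta}$ and hence, applied to a function in $B^{r,M}_{\infty,\infty}$, yields $L^\infty$-decay of order $2^{-h\delta r}$.

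First I would collect two auxiliary estimates, valid uniformly in $h$ and $\xi$. Since $\phi(2^{-h\delta/M}D_x)\partial^\alpha_\xi a(\cdot,\xi)$ is Fourier-localized in an $|\cdot|_M$-ball of radius $\sim 2^{h\delta}$ in the $x$-variable, Proposition \ref{QBI} together with the pointwise bound \eqref{eq:2} gives $\|\partial^\beta_x\phi(2^{-h\delta/M}D_x)\partial^\alpha_\xi a(\cdot,\xi)\|_{L^\infty}\le C\,2^{h\delta\beta\cdot 1/M}\langle\xi\rangle_M^{m-\alpha\cdot 1/M}$. Symmetrically, $[I-\phi(2^{-h\delta/M}D_x)]\partial^\alpha_\xi a(\cdot,\xi)$ is a sum of quasi-homogeneous dyadic blocks indexed by $k\gtrsim h\delta$, so by the very definition \eqref{HOLDER} of the Besov norm, $\|[I-\phi(2^{-h\delta/M}D_x)]\partial^\alpha_\xi a(\cdot,\xi)\|_{L^\infty}\le C\sum_{k\ge h\delta}2^{-kr}\langle\xi\rangle_M^{m-\alpha\cdot 1/M}\le C'\,2^{-h\delta r}\langle\xi\rangle_M^{m-\alpha\cdot 1/M}$, where the Besov bound \eqref{eq:3} for $a\in B^{r,M}_{\infty,\infty}S^m_M$ (with $\delta=0$) is used.

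Next I would verify that $a^\#\in S^m_{M,\delta}$. Applying the Leibniz rule, one has $\partial^\beta_x\partial^\alpha_\xi[\phi(2^{-h\delta/M}D_x)a(x,\xi)\varphi_h(\xi)]=\sum_{\gamma\le\alpha}\binom{\alpha}{\gamma}\partial^\beta_x\phi(2^{-h\delta/M}D_x)\partial^{\alpha-\gamma}_\xi a(x,\xi)\,\partial^\gamma\varphi_h(\xi)$. Using the first auxiliary estimate, the bound $|\partial^\gamma\varphi_h(\xi)|\le C\,2^{-h\gamma\cdot 1/M}$ from \eqref{SC6}, and the equivalence $2^h\sim\langle\xi\rangle_M$ on $\mathrm{supp}\,\varphi_h$, each summand is majorised by $C\langle\xi\rangle_M^{m-\alpha\cdot 1/M+\delta\beta\cdot 1/M}\mathbf 1_{\mathrm{supp}\,\varphi_h}(\xi)$; local finiteness of the family $\{\mathrm{supp}\,\varphi_h\}$ then yields \eqref{eq:1} for $a^\#$.

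For $a^\natural$ I would use $1=\sum_h\varphi_h$ to rewrite $a^\natural(x,\xi)=\sum_h [I-\phi(2^{-h\delta/M}D_x)]a(x,\xi)\,\varphi_h(\xi)$. The second auxiliary estimate, combined with Leibniz and \eqref{SC6} exactly as above, produces the pointwise bound $|\partial^\alpha_\xi a^\natural(x,\xi)|\le C\langle\xi\rangle_M^{m-r\delta-\alpha\cdot 1/M}$, which is \eqref{eq:2} for the class $B^{r,M}_{\infty,\infty}S^{m-r\delta}_{M,\delta}$. For the Besov estimate \eqref{eq:3} it suffices to observe that $I-\phi(2^{-h\delta/M}D_x)$ is a Fourier multiplier that commutes with every dyadic block $\varphi_k(D_x)$, so its uniform $L^\infty$-boundedness transfers directly to $B^{r,M}_{\infty,\infty}$; combining with the Besov estimate on $\partial^{\alpha-\gamma}_\xi a(\cdot,\xi)$ and again with \eqref{SC6}, one obtains $\|\partial^\alpha_\xi a^\natural(\cdot,\xi)\|_{B^{r,M}_{\infty,\infty}}\le C\langle\xi\rangle_M^{m-\alpha\cdot 1/M}=C\langle\xi\rangle_M^{(m-r\delta)-\alpha\cdot 1/M+\delta r}$, as required.

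The main obstacle I anticipate is the rigorous uniform-in-$h$ boundedness of the multipliers $\phi(2^{-h\delta/M}D_x)$ on $L^\infty$: this is obtained by writing the convolution kernel as $(2^{h\delta})^{1/|M|}(\mathcal F^{-1}\phi)\bigl((2^{h\delta})^{1/M}\,\cdot\bigr)$, whose $L^1$-norm is $h$-independent (the same rescaling trick used in the proof of Proposition \ref{QBI}); from there, commutation with $\varphi_k(D_x)$ extends uniform boundedness to $B^{r,M}_{\infty,\infty}$. Once that technical point is secured, the rest of the argument is the routine combination of Leibniz, \eqref{SC6} and the locally finite support property of the quasi-homogeneous partition of unity.
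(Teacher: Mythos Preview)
The paper does not actually prove this proposition; it simply refers to \cite[Proposition 3.9]{GM3A} and \cite{Ta1}. Your argument is precisely the standard one that underlies those references: smooth in $x$ at the scale $2^{h\delta}$ adapted to the $\xi$-dyadic block, invoke the Bernstein inequality (Proposition~\ref{QBI}) to control $x$-derivatives of $a^\#$, and exploit the $B^{r,M}_{\infty,\infty}$-norm to gain the factor $2^{-h\delta r}$ on the high-$x$-frequency remainder $a^\natural$. The outline is correct and the technical point you flag (uniform $L^\infty$- and $B^{r,M}_{\infty,\infty}$-boundedness of $\phi(2^{-h\delta/M}D_x)$ via the $L^1$-invariance of the rescaled kernel) is exactly the right ingredient.
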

\noindent
\begin{prop}\label{SHARPEL}
Assume that $a(x,\xi)\in B^{r,M}_{\infty,\infty}S^{m}_{M}$, $m\in\mathbb R$,  is microlocally $M-$elliptic at $(x_0,\xi_0)\in T^{\circ}\mathbb{R}^n$, then for any $\delta\in]0,1]$, $a^\#(x,\xi)\in S^{m}_{M,\delta}$ is still microlocally $M-$elliptic at $(x_0,\xi_0)$.
\end{prop}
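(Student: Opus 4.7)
The plan is to leverage the decomposition $a = a^\# + a^\natural$ from Proposition \ref{pro:2} and argue that $a^\natural$ has strictly lower $\xi$-order $m - r\delta < m$, hence its contribution is negligible at sufficiently high frequency and the lower bound ensuring microlocal ellipticity of $a$ transfers to $a^\#$.

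First, I would unpack the hypothesis: by Definition \ref{Melliptic} applied to $a$, there exist an open neighborhood $U$ of $x_0$, an $M$-conic open neighborhood $\Gamma_M$ of $\xi_0$, and constants $c_0,\rho_0>0$ such that
\begin{equation*}
|a(x,\xi)| \ge c_0\langle\xi\rangle_M^m,\qquad (x,\xi)\in U\times\Gamma_M,\ |\xi|_M>\rho_0.
\end{equation*}
Then I would invoke Proposition \ref{pro:2} to get $a^\# \in S^m_{M,\delta}$ and, crucially, $a^\natural \in B^{r,M}_{\infty,\infty}S^{m-r\delta}_{M,\delta}$. Applying the zero-order pointwise estimate \eqref{eq:2} from Definition \ref{nonregularsymbols} (with $\alpha=0$) to $a^\natural$ yields a constant $C>0$ with
\begin{equation*}
|a^\natural(x,\xi)| \le C\langle\xi\rangle_M^{m-r\delta},\qquad (x,\xi)\in\mathbb{R}^n\times\mathbb{R}^n.
\end{equation*}

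Next, on the region $U\times(\Gamma_M\cap\{|\xi|_M>\rho_0\})$ the reverse triangle inequality gives
\begin{equation*}
|a^\#(x,\xi)| \ge |a(x,\xi)|-|a^\natural(x,\xi)| \ge c_0\langle\xi\rangle_M^m - C\langle\xi\rangle_M^{m-r\delta}.
\end{equation*}
Since $r\delta>0$, by Proposition \ref{WPROP} i) we may select $\rho_0'\ge\rho_0$ large enough so that $C\langle\xi\rangle_M^{-r\delta}\le c_0/2$ whenever $|\xi|_M>\rho_0'$; this gives $|a^\#(x,\xi)|\ge(c_0/2)\langle\xi\rangle_M^m$ on $U\times(\Gamma_M\cap\{|\xi|_M>\rho_0'\})$, which is precisely the microlocal $M$-ellipticity of $a^\#$ at $(x_0,\xi_0)$ in the sense of Definition \ref{Melliptic}.

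There is no serious obstacle in this argument; the smoothness and symbol-class membership of $a^\#$ are handed to us by Proposition \ref{pro:2}, and the only nontrivial input is the pointwise $L^\infty$-in-$x$ bound on $a^\natural$, which is already built into Definition \ref{nonregularsymbols} through \eqref{eq:2}. The neighborhoods $U$ and $\Gamma_M$ for $a^\#$ can be taken identical to those for $a$; only the ellipticity threshold $\rho_0$ may need to be enlarged.
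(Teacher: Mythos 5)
Your proof is correct, and its skeleton is the same as the paper's: write $a^\#=a-a^\natural$, bound the perturbation, and absorb it into the ellipticity constant via the reverse triangle inequality for $|\xi|_M$ large. The difference lies in how the smallness of $a-a^\#$ is obtained. You simply quote Proposition \ref{pro:2} to place $a^\natural$ in $B^{r,M}_{\infty,\infty}S^{m-r\delta}_{M,\delta}$ and then read off the zero-order pointwise estimate \eqref{eq:2}, getting $|a^\natural(x,\xi)|\le C\langle\xi\rangle_M^{m-r\delta}$ everywhere; since $r\delta>0$ this is dominated by $\tfrac{c_0}{2}\langle\xi\rangle_M^m$ for $|\xi|_M$ large, and you are done. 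The paper instead re-derives the needed bound by hand: it expands $a^\#-a=\sum_{h\ge h_0}\left(\phi_h(D_x)-I\right)a(\cdot,\xi)\varphi_h(\xi)$ on $\{|\xi|_M>\rho_0\}$, uses the almost-orthogonality of the supports of the $\varphi_h$, Cauchy--Schwarz, and the estimate $\Vert(\phi_h(D_x)-I)a(\cdot,\xi)\Vert_{L^\infty}\lesssim 2^{-h\delta r}\Vert a(\cdot,\xi)\Vert_{B^{r,M}_{\infty,\infty}}$ from \cite[Lemma 3.8]{GM3A}, arriving at $|a^\#(x,\xi)-a(x,\xi)|\le C2^{-h_0\delta r}\langle\xi\rangle_M^m$ with $2^{h_0}$ comparable to $\rho_0$ --- which is the same $\langle\xi\rangle_M^{m-\delta r}$-type decay you obtain. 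Your route is shorter and more modular, at the price of invoking the full strength of Proposition \ref{pro:2} (whose proof in \cite{GM3A} rests on essentially the computation the paper inlines); the paper's version only uses the elementary Besov bound $\Vert a(\cdot,\xi)\Vert_{B^{r,M}_{\infty,\infty}}\le c^*\langle\xi\rangle_M^m$ and is thus self-contained. No gap either way.
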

\begin{proof}
The microlocal $M-$ellipticity of $a(x,\xi)$ yields the existence of positive constants $c_1$, $\rho_1$ such that
\begin{equation}\label{microellitticita}
|a(x,\xi)|\ge c_1\langle\xi\rangle^m_M\,,\,\,{\rm when}\,\,(x,\xi)\in U\times\Gamma_M\,\,{\rm and}\,\,|\xi|_M>\rho_1\,,
\end{equation}
where $U$ is a suitable open neighborhood of $x_0$ and $\Gamma_M$ an open $M-$conic neighborhood of $\xi_0$. On the other hand, for any $\rho_0>0$ we can find a positive integer $h_0$, which increases together with $\rho_0$, such that $\varphi_h(\xi)=0$ as long as $\vert \xi\vert_M>\rho_0$ and $h=-1,\dots, h_0-1$. We can then write:
\begin{equation}\label{SHARPEL:1}
a^\#(x,\xi)=\sum_{h=h_0}^\infty \phi\left( 2^{-h\delta/M}D_x\right)a(x,\xi)\varphi_h(\xi), \quad \vert \xi\vert_M>\rho_0\,.
\end{equation}
Set for brevity  $\phi\left( 2^{-h\delta/M}\cdot\right)=\phi_h(\cdot)$.
\newline
By means of \eqref{SHARPEL:1}, the Cauchy-Schwarz inequality and \cite[Lemma 3.8]{GM3A}, when $|\xi|_M>\rho_0$ we can estimate
\begin{equation*}
\begin{array}{l}
\vert a^\#(x,\xi)-a(x,\xi)\vert^2\\
=\left\vert\sum\limits_{h=h_0}^\infty \left(\phi_h(D_x) -I\right)a(x,\xi) \varphi_h(\xi)\right\vert^2\\
=\sum\limits_{h=h_0}^\infty \sum\limits_{k=h-N_0}^{h+N_0}\left\langle \left(\phi_h(D_x)) - I\right) a(x,\xi) \varphi_h(\xi) , \left(\phi_k(D_x) -I\right) a(x,\xi) \varphi_k(\xi) \right\rangle\\
=\sum\limits_{t=-N_0}^{N_0} \sum\limits_{h=h_0}^{\infty}\left\langle \left(\phi_h(D_x) -I\right) a(x,\xi) \varphi_h(\xi) , \left(\phi_{h+t}(D_x) -I\right) a(x,\xi) \varphi_{h+t}(\xi) \right\rangle\\
\leq\sum\limits_{t=-N_0}^{N_0}\!\sum\limits_{h=h_0}^\infty \!\Vert \left(\phi_h(D_x))- I\right)a(\cdot,\xi)\Vert_{L^\infty}\vert \varphi_h(\xi)\vert\\
\times\Vert \left(\phi_{h+t}(D_x)-I\right) a(\cdot,\xi)\Vert_{L^\infty}\vert \varphi_{h+t}(\xi)\vert\\
\leq C^2 \sum\limits_{t=-N_0}^{N_0} \sum\limits_{h=h_0}^{\infty}2^{-h\delta r} 2^{-(h+t)\delta r}\Vert a(\cdot,\xi) \Vert^2_{B^{r,M}_{\infty,\infty}}\\
\leq C^2\sum\limits_{h=h_0}^\infty 2^{-2h\delta r}\Vert a(\cdot,\xi) \Vert^2_{B^{r,M}_{\infty,\infty}}\leq
C^2 2^{-2h_0\delta r}\Vert a(\cdot,\xi) \Vert^2_{B^{r,M}_{\infty,\infty}},
\end{array}
\end{equation*}
where  $C$ denotes different positive constants depending only on $\delta, N_0$ and $r$. Since $\Vert a(\cdot,\xi)\Vert_{B^{r,M}_{\infty,\infty}}\le c^*\langle\xi\rangle^m_M$, let us fix $\rho_0$ large enough to have $C2^{-h_0\delta r}< \frac{c_1}{2c^*}$ (with $c_1$ from \eqref{microellitticita}). Then for $(x,\xi)\in U\times\Gamma_M$ and $|\xi|_M>{\rm max}\,\{\rho_0,\rho_1\}$
\begin{equation}\label{SHARPEL:3}
\begin{array}{l}
\vert a^\#(x,\xi)\vert\ge \vert a(x,\xi) \vert-\vert a^\#(x,\xi)-a(x,\xi)\vert\ge\frac{c_1}{2}\langle\xi\rangle^m_M
\end{array}
\end{equation}
follows and the proof is concluded.
\end{proof}
\noindent
{\it Proof of Theorem \ref{APPLICATION}}

\vspace{.1cm}
\noindent
Consider now the linear partial differential equation \eqref{APPL:1}, with $A(x,D)$ microlocally $M-$elliptic at $(x_0,\xi_0)$. For an arbitrarily fixed $\delta\in]0,1/m^*[$, we split the symbol $A(x,\xi)$ as $A(x,\xi)=A^{\#}(x,\xi)+A^{\natural}(x,\xi)$, according to Proposition \ref{pro:2}. In view of Propositions \ref{SHARPEL}, \ref{microparametrix} there exists a smooth symbol $B(x,\xi)\in S^{-m}_{M, \delta}$ such that
$$
B(x,D)A^{\#}(x,D)=I+R(x,D)\,,
$$
where $R(x,D)$ is microlocally regularizing at $(x_0,\xi_0)$.
\newline
Applying now $B(x,D)$ to both sides of \eqref{APPL:1}, on the left, we obtain:
\begin{equation}\label{APPL:4}
u=B(x,D)f-R(x,D)u-B(x,D)A^{\natural}(x,D)u\,.
\end{equation}
Assume that $f\in mcl B^{s-m, M}_{\infty, \infty}(x_0,\xi_0)$ and $u\in B^{s-\delta r, M}_{\infty, \infty}$ for $(\delta-1)r+m<s\le r+m$. Since $A^{\natural}(x,\xi)\in B^{r,M}_{\infty,\infty}S^{m-r\delta}_{M,\delta}$, one can apply Theorem \ref{sobolev-holder-cont} and Corollary \ref{smooth-cont} to find that $B(x,D)A^{\natural}(x,D)u\in B^{s, M}_{\infty, \infty}$; moreover Theorem \ref{microsobolevaction} and Corollary \ref{smooth-cont} give $B(x,D)f\in mcl B^{s, M}_{\infty, \infty}(x_0,\xi_0)$ and $R(x,D)u\in B^{s, M}_{\infty, \infty}$. This shows the result of Theorem \ref{APPLICATION}.

\vspace{.1cm}
\noindent
By means of the argument  stated above, we obtain the following general result for non regular pseudodifferential operators.
\begin{cor}\label{APPLICATION2}
For $a(x,\xi)\in B^{r,M}_{\infty,\infty}S^m_M$, $r>0$, $u$ belonging a priori to $B^{s-\delta r,M}_{\infty,\infty}$, for $(\delta-1)r+m<s\le r+m$, $\delta\in]0,1/m^*[$, we have
$$
\begin{array}{ll}
WF_{B^{s,M}_{\infty,\infty}}(u)\subset WF_{B^{s-m,M}_{\infty,\infty}}(a(x,D)u)\cup{\rm Char}(a)\,.
\end{array}
$$
\end{cor}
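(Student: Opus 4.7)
The plan is to mirror the argument used in the proof of Theorem \ref{APPLICATION} verbatim, with the differential operator $A(x,D)$ replaced by the generic non regular pseudodifferential operator $a(x,D)$. Passing to the contrapositive, the inclusion is equivalent to the implication: for any $(x_0,\xi_0)\in T^{\circ}\mathbb R^n$ with $(x_0,\xi_0)\notin {\rm Char}(a)$ and $a(x,D)u\in mcl B^{s-m,M}_{\infty,\infty}(x_0,\xi_0)$, necessarily $u\in mcl B^{s,M}_{\infty,\infty}(x_0,\xi_0)$. I would therefore fix such a microlocal point and work in a conic neighborhood of it.

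I would first apply Proposition \ref{pro:2} with the given $\delta\in\,]0,1/m^*[$ to split the symbol as $a=a^{\#}+a^{\natural}$, where $a^{\#}\in S^m_{M,\delta}$ and $a^{\natural}\in B^{r,M}_{\infty,\infty}S^{m-r\delta}_{M,\delta}$. By Proposition \ref{SHARPEL} the smooth part $a^{\#}$ inherits microlocal $M$-ellipticity at $(x_0,\xi_0)$, so Proposition \ref{microparametrix} yields a left parametrix $B\in S^{-m}_{M,\delta}$ satisfying
\begin{equation*}
B(x,D)a^{\#}(x,D)=I+R(x,D),
\end{equation*}
with $R(x,D)$ microlocally regularizing at $(x_0,\xi_0)$. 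Using $a^{\#}(x,D)u=a(x,D)u-a^{\natural}(x,D)u$ and applying $B(x,D)$ on the left yields
\begin{equation*}
u=B(x,D)a(x,D)u-B(x,D)a^{\natural}(x,D)u-R(x,D)u.
\end{equation*}

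The three summands are then treated separately using the machinery of \S\ref{mcl}. For the first, Theorem \ref{microsobolevaction} applied to $B\in S^{-m}_{M,\delta}$ propagates the microlocal $B^{s-m,M}_{\infty,\infty}$ regularity of $a(x,D)u$ into $mcl B^{s,M}_{\infty,\infty}(x_0,\xi_0)$. For the second, since $a^{\natural}\in B^{r,M}_{\infty,\infty}S^{m-r\delta}_{M,\delta}$ and $u\in B^{s-\delta r,M}_{\infty,\infty}$ a priori, Theorem \ref{sobolev-holder-cont} gives $a^{\natural}(x,D)u\in B^{s-m,M}_{\infty,\infty}$, whereupon Corollary \ref{smooth-cont} sends this into $B^{s,M}_{\infty,\infty}$ via $B(x,D)$, hence into $mcl B^{s,M}_{\infty,\infty}(x_0,\xi_0)$. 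Finally, the microlocally regularizing property of $R$ places $R(x,D)u$ in $mcl B^{\sigma,M}_{\infty,\infty}(x_0,\xi_0)$ for every $\sigma\in\mathbb R$, in particular at level $s$. Summing the three contributions delivers the desired microlocal regularity of $u$.

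The only bookkeeping that is not a direct quotation of earlier results, and therefore the main (really the sole) point deserving verification, is that the standing assumptions $(\delta-1)r+m<s\le r+m$ and $\delta<1/m^*\le 1$ translate exactly into $s-m\in\,](\delta-1)r,r]$, which is the admissible range of Theorem \ref{sobolev-holder-cont} applied to $a^{\natural}(x,D)$ with target space $B^{s-m,M}_{\infty,\infty}$ (the endpoint $s-m=r$ being admissible thanks to $\delta<1$). This is the same compatibility already exploited in the proof of Theorem \ref{APPLICATION}, so no further obstacle is expected.
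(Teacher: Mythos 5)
Your proposal is correct and coincides with the paper's own justification: the authors prove the corollary simply by observing that the parametrix argument of Theorem \ref{APPLICATION} (splitting $a=a^{\#}+a^{\natural}$ via Proposition \ref{pro:2}, invoking Propositions \ref{SHARPEL} and \ref{microparametrix}, and estimating the three terms $B(x,D)a(x,D)u$, $B(x,D)a^{\natural}(x,D)u$, $R(x,D)u$ exactly as you do) carries over verbatim from the differential to the general non regular pseudodifferential case. Your bookkeeping check that $(\delta-1)r+m<s\le r+m$ is precisely the admissible range $s-m\in\,](\delta-1)r,r]$ for Theorem \ref{sobolev-holder-cont} is the same compatibility used there.
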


\section{Some applications to non linear equations}\label{nl}
In this section, we apply the previous results to the study of microlocal properties for a class of quasi-linear and fully non linear partial differential equation of weighted elliptic type.
\newline
For $M=(m_1,\dots, m_n)$, satisfying the assumptions in \S 2, and a given positive integer $m$, let us first consider the quasi-linear equation of quasi-homogeneous type
\begin{equation}\label{qleq}
\sum\limits_{\alpha\cdot 1/M\le m}a_\alpha(x, D^{\beta}u)_{\beta\cdot 1/M\le m-1}D^\alpha u=f(x)\,,
\end{equation}
where $a_\alpha(x,\zeta)\in C^\infty(\mathbb R^n\times\mathbb C^N)$ are given functions of the vectors $x\in\mathbb R^n$, $\zeta=(\zeta_\beta)_{\beta\cdot1/M\le m-1}\in\mathbb C^N$ and $f(x)$ is a given forcing term. We assume that the equation \eqref{qleq} is microlocally $M-$elliptic at a given point $(x_0,\xi_0)\in T^\circ\mathbb R^n$, meaning that the $M-$principal symbol $A_m(x,\xi,\zeta):=\sum\limits_{\alpha\cdot 1/M\le m}a_\alpha(x, \zeta)\xi^\alpha$ of the differential operator in the left-hand side of the equation satisfies
\begin{equation}\label{qom}
A_m(x,\xi,\zeta)\neq 0\,\,\,\mbox{for}\,\,(x,\xi)\in U\times\Gamma_M,
\end{equation}
where $U$ is a suitable neighborhood of $x_0$ and $\Gamma_M$ a suitable $M-$conic neighborhood of $\xi_0$.
\newline
Under the previous assumptions, we may prove the following
\begin{thm}\label{qlmicroreg}
Consider  $r>0$, $0<\delta<\frac1{m^*}$, $\sigma<s\le r+m$, where $\sigma=\sigma_{r,\delta,m}:=\max\{(\delta-1)r+m, r+m-1\}$. Let $u\in B^{r+m-1, M}_{\infty, \infty}\cap B^{s-\delta r, M}_{\infty, \infty}$ be a solution to the equation \eqref{qleq}, microlocally $M-$elliptic at $(x_0,\xi_0)\in T^{\circ}\mathbb{R}^n$, with $f\in mcl B^{s-m, M}_{\infty, \infty}(x_0, \xi_0)$. Then $u\in mcl B^{s, M}_{\infty, \infty}(x_0,\xi_0)$.
\end{thm}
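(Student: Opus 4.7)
\noindent\emph{Plan.} The strategy is the Beals--Reeds freezing of coefficients adapted to the quasi-homogeneous Besov setting: because each coefficient $a_\alpha(x,(D^\beta u)_\beta)$ depends only on derivatives of $u$ of quasi-homogeneous order at most $m-1$, setting
$$\tilde a_\alpha(x):=a_\alpha\bigl(x,(D^\beta u(x))_{\beta\cdot 1/M\le m-1}\bigr)$$
turns \eqref{qleq} into a linear equation
$$\tilde A(x,D)u:=\sum_{\alpha\cdot 1/M\le m}\tilde a_\alpha(x)\,D^\alpha u=f(x)$$
of the form \eqref{APPL:1}. The proof then reduces to verifying the hypotheses of Theorem \ref{APPLICATION} for $\tilde A(x,D)$ at $(x_0,\xi_0)$.

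\noindent\emph{Key steps.} First, from the a priori assumption $u\in B^{r+m-1,M}_{\infty,\infty}$ and Proposition \ref{QBI} applied to each dyadic block $u_h$ (whose Fourier support lies in $C^{K,M}_h$), one gets $\Vert D^\beta u_h\Vert_{L^\infty}\le C\,2^{h\,\beta\cdot 1/M}\Vert u_h\Vert_{L^\infty}$, hence $D^\beta u\in B^{r+m-1-\beta\cdot 1/M,M}_{\infty,\infty}\subseteq B^{r,M}_{\infty,\infty}$ for every $\beta$ with $\beta\cdot 1/M\le m-1$ (the embedding $B^{s,M}_{\infty,\infty}\subseteq B^{r,M}_{\infty,\infty}$ for $s\ge r$ being immediate from \eqref{HOLDER}). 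A vector-valued analogue of Theorem \ref{COMP}/Remark \ref{REMCOMP} applied to the smooth map $(x,\zeta)\mapsto a_\alpha(x,\zeta)$ then yields $\tilde a_\alpha\in B^{r,M}_{\infty,\infty}$. Second, the $M$-principal symbol of $\tilde A(x,D)$ is $A_m(x,\xi,(D^\beta u(x))_\beta)$; it is continuous in $x$ (since $B^{r,M}_{\infty,\infty}$ with $r>0$ embeds in $L^\infty\cap C^0$), non-vanishing on $U\times\Gamma_M$ by \eqref{qom}, and quasi-homogeneous of degree $m$ in $\xi$. A compactness argument on the closure of a shrunken set $\overline{U'}\times(\overline{\Gamma_M'}\cap\{|\xi|_M=1\})$ then produces $c_0>0$ with $|A_m(x,\xi,(D^\beta u(x))_\beta)|\ge c_0\langle\xi\rangle_M^m$ on $U'\times\Gamma_M'$ for $|\xi|_M$ sufficiently large, i.e., the microlocal $M$-ellipticity required by Definition \ref{Melliptic}. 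The remaining hypotheses of Theorem \ref{APPLICATION} are in place: $f\in mcl B^{s-m,M}_{\infty,\infty}(x_0,\xi_0)$ and $u\in B^{s-\delta r,M}_{\infty,\infty}$ are assumed, while $(\delta-1)r+m<s\le r+m$ follows from $\sigma<s\le r+m$ since $\sigma\ge(\delta-1)r+m$. The conclusion of Theorem \ref{APPLICATION} is exactly $u\in mcl B^{s,M}_{\infty,\infty}(x_0,\xi_0)$.

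\noindent\emph{Main obstacle.} The only non-routine item is the composition estimate $\tilde a_\alpha\in B^{r,M}_{\infty,\infty}$ in the presence of joint smooth $x$-dependence and a vector argument $\zeta\in\mathbb{C}^N$. This is handled by rerunning the telescoping argument from the proof of Theorem \ref{COMP}: decompose $a_\alpha\bigl(x,\Psi_{p+1}(D^\beta u)_\beta\bigr)-a_\alpha\bigl(x,\Psi_p(D^\beta u)_\beta\bigr)$ as an integral over $t\in[0,1]$ of $\nabla_\zeta a_\alpha$ against the $p$-th dyadic increment of $(D^\beta u)_\beta$, identify the resulting coefficients as quasi-homogeneous Meyer multipliers in the sense of Proposition \ref{MEMU}, and verify \eqref{eqMEMU1} through the chain rule together with Proposition \ref{QBI} applied to each $\Psi_p D^\beta u$; the smooth joint $x$-dependence of $a_\alpha$ contributes only uniformly bounded factors on the compact range of the arguments and is thus harmless. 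The role of the threshold $r+m-1<s$ (i.e., $\sigma\ge r+m-1$) is precisely to ensure that the resulting conclusion $u\in mcl B^{s,M}_{\infty,\infty}(x_0,\xi_0)$ is a genuine microlocal improvement over the a priori regularity $u\in B^{r+m-1,M}_{\infty,\infty}$ used to make this composition step work.
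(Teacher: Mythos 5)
Your proposal is correct and follows essentially the same route as the paper: freeze the coefficients to reduce \eqref{qleq} to a linear equation of type \eqref{APPL:1}, show $D^\beta u\in B^{r,M}_{\infty,\infty}$ and hence $a_\alpha(\cdot,D^\beta u)_{\beta}\in B^{r,M}_{\infty,\infty}$ via (a vector-valued version of) Theorem \ref{COMP}, check that \eqref{qom} gives microlocal $M$-ellipticity of the frozen symbol, and apply Theorem \ref{APPLICATION} with $(\delta-1)r+m<s\le r+m$. The extra details you supply on the joint $(x,\zeta)$-dependence in the composition step and the compactness argument behind the ellipticity bound are points the paper leaves implicit, but they do not alter the argument.
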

\begin{proof}
In view of Theorems \ref{sobolev-holder-cont} and \ref{COMP}, from $u\in B^{r+m-1, M}_{\infty, \infty}$ it follows that $D^\beta u\in B^{r, M}_{\infty, \infty}$, as long as $\beta\cdot 1/M\le m-1$, hence $a_\alpha(\cdot, D^{\beta}u)_{\beta\cdot 1/M}\in B^{r, M}_{\infty, \infty}$.
\newline
Then, since $u\in B^{s-\delta r, M}_{\infty, \infty}$, $0<\delta<\frac1{m^*}$ and $(\delta-1)r+m<s\le r+m$, we can apply Theorem \ref{APPLICATION} to $A(x,\xi):=\sum\limits_{\alpha\cdot 1/M\le m}a_{\alpha}(x,D^\beta u)_{\beta\cdot 1/M\le m-1}\xi^\alpha\in B^{r, M}_{\infty, \infty}S^m_M$, which is microlocally $M-$elliptic at $(x_0,\xi_0)$ because of \eqref{qom}. This shows the result.
\end{proof}
\noindent
We observe that if $r\delta\ge 1$, then $B^{r+m-1, M}_{\infty, \infty}\cap B^{s-\delta r, M}_{\infty, \infty}=B^{r+m-1, M}_{\infty, \infty}$, since $s-\delta r\le r+m-\delta r\le r+m-1$. If $r>m^*$, we may always find $\delta^*\in]0,1/m^*[$ such that $r\delta^*\ge 1$, the minimum admissible value being $\delta^*=\frac1{r}$. Then the microregularity result of Theorem \ref{qlmicroreg} applies to an arbitrary solution $u\in B^{r+m-1, M}_{\infty, \infty}$ of the equation \eqref{qleq} with $\delta^*=\frac1{r}$ (note that $\sigma=r+m-1$ when $r>m^*$). We can then state the following
\begin{cor}
For $r>m^*$, $r+m-1<s\le r+m$, let $u\in B^{r+m-1, M}_{\infty, \infty}$ be a solution to the equation \eqref{qleq}, microlocally $M-$elliptic at $(x_0,\xi_0)\in T^{\circ}\mathbb{R}^n$, with $f\in mcl B^{s-m, M}_{\infty, \infty}(x_0, \xi_0)$. Then $u\in mcl B^{s, M}_{\infty, \infty}(x_0,\xi_0)$.
\end{cor}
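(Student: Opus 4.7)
The plan is to deduce this corollary directly from Theorem \ref{qlmicroreg} by picking the specific admissible value $\delta^*:=1/r$. Since $r>m^*$, one has $\delta^*=1/r<1/m^*$, so $\delta^*\in\,]0,1/m^*[$ and this choice respects the range of $\delta$ required by Theorem \ref{qlmicroreg}.

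The first thing to check is that with $\delta=\delta^*$ the two Besov regularity assumptions of Theorem \ref{qlmicroreg} collapse into the single one appearing in the corollary. Because $\delta^* r=1$, the bound $s\le r+m$ forces $s-\delta^* r=s-1\le r+m-1$; by the natural embedding $B^{r+m-1,M}_{\infty,\infty}\hookrightarrow B^{s-\delta^* r,M}_{\infty,\infty}$ of quasi-homogeneous Besov spaces of comparable orders, any $u\in B^{r+m-1,M}_{\infty,\infty}$ automatically belongs to the intersection $B^{r+m-1,M}_{\infty,\infty}\cap B^{s-\delta^* r,M}_{\infty,\infty}$, exactly the hypothesis needed to invoke the theorem.

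Next I would verify that $s$ lies in the admissible window. The threshold $\sigma_{r,\delta^*,m}=\max\{(\delta^*-1)r+m,\,r+m-1\}$ reduces to $r+m-1$, since $(\delta^*-1)r+m=1-r+m\le r+m-1$ whenever $r\ge 1$, and $r>m^*\ge 1$ by hypothesis. Thus the assumption $r+m-1<s\le r+m$ of the corollary coincides precisely with the range $\sigma_{r,\delta^*,m}<s\le r+m$ required by Theorem \ref{qlmicroreg} at $\delta=\delta^*$, and a direct application of that theorem yields $u\in mcl B^{s,M}_{\infty,\infty}(x_0,\xi_0)$. There is no genuine obstacle: the analytical content is entirely carried by Theorem \ref{qlmicroreg}, and the corollary is simply the observation that when the coefficient regularity $r$ is large enough ($r>m^*$) the a priori regularity of the solution itself absorbs the second Besov condition, so that a single a priori bound suffices.
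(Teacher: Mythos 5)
Your argument is correct and coincides with the paper's own reasoning: the remark preceding the corollary makes exactly the same choice $\delta^*=1/r\in\,]0,1/m^*[$, uses $s-\delta^*r\le r+m-1$ to reduce the intersection hypothesis of Theorem \ref{qlmicroreg} to $u\in B^{r+m-1,M}_{\infty,\infty}$, and notes that $\sigma=r+m-1$ when $r>m^*$. Nothing further is needed.
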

\noindent
Let us consider now the fully non linear equation
\begin{equation}\label{nleq}
F(x,D^\alpha u)_{\alpha\cdot 1/M\le m}=f(x)\,,
\end{equation}
where $m$ is a given positive integer, $F(x,\zeta)\in C^\infty(\mathbb R^n\times\mathbb C^N)$ is a known function of $x\in\mathbb R^n$, $\zeta=(\zeta_\beta)_{\beta\cdot 1/M\le m-1}\in \mathbb C^N$.
\newline
Let the equation \eqref{nleq} be microlocally $M-$elliptic at $(x_0,\xi_0)\in T^\circ\mathbb R^n$, meaning that the {\it linearized} $M-$principal symbol $A_m(x,\xi,\zeta):=\sum\limits_{\alpha\cdot 1/M=m}\frac{\partial F}{\partial\zeta_\alpha}(x,\zeta)\xi^\alpha$ satisfies
\begin{equation}\label{nlell}
\sum\limits_{\alpha\cdot 1/M=m}\frac{\partial F}{\partial\zeta_\alpha}(x,\zeta)\xi^\alpha\neq 0\,\,\,\mbox{for}\,\,(x,\xi)\in U\times\Gamma_M,
\end{equation}
for $U$ a suitable neighborhood of $x_0$ and $\Gamma_M$ a suitable $M-$conic neighborhood of $\xi_0$. Under the assumptions above, we may prove the following
\begin{thm}\label{nlmicroreg}
For $r>0$, $0<\delta<\frac1{m^*}$, assume that $u\in B^{r+m, M}_{\infty, \infty}$, satisfying in addition
\begin{equation}\label{der}
\partial_{x_j}u\in B^{r+m-\delta r, M}_{\infty, \infty}\,,\quad j=1,\dots,n\,,
\end{equation}
is a solution to the equation \eqref{nleq}, microlocally $M-$elliptic at $(x_0,\xi_0)\in T^{\circ}\mathbb{R}^n$. If moreover the forcing term satisfies
\begin{equation}\label{der_f}
\partial_{x_j}f\in mcl B^{r, M}_{\infty, \infty}(x_0,\xi_0)\,,\quad j=1,\dots,n\,,
\end{equation}
we obtain 
\begin{equation}\label{der_mcl}
\partial_{x_j}u\in mcl B^{r+m, M}_{\infty, \infty}(x_0,\xi_0)\,,\quad j=1,\dots, n\,.
\end{equation}
\end{thm}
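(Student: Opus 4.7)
The plan is to linearize the equation \eqref{nleq} by differentiating it with respect to $x_j$ and then to apply Theorem \ref{APPLICATION} to the resulting linear partial differential equation in $u_j=\partial_{x_j}u$. Indeed, applying $\partial_{x_j}$ to both sides of \eqref{nleq} and using $\partial_{x_j}D^\alpha u=D^\alpha u_j$ yields
\begin{equation*}
\sum_{\alpha\cdot 1/M\le m}\frac{\partial F}{\partial\zeta_\alpha}\bigl(x,(D^\beta u)_{\beta\cdot 1/M\le m}\bigr)\,D^\alpha u_j\;=\;\partial_{x_j}f-\frac{\partial F}{\partial x_j}\bigl(x,(D^\beta u)_{\beta\cdot 1/M\le m}\bigr)\,,
\end{equation*}
which I abbreviate as $A_j(x,D)u_j=g_j(x)$. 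The $M$-principal symbol of $A_j(x,D)$ is exactly the linearized principal symbol appearing in \eqref{nlell}, hence $A_j(x,D)$ is microlocally $M$-elliptic at $(x_0,\xi_0)$.

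Next I must verify that the coefficients $a_\alpha(x):=\frac{\partial F}{\partial\zeta_\alpha}(x,(D^\beta u)_{\beta})$ lie in $B^{r,M}_{\infty,\infty}$. Since $u\in B^{r+m,M}_{\infty,\infty}$, pseudodifferential continuity (Corollary \ref{smooth-cont}) applied to $D^\beta\in{\rm Op}\,S^{\beta\cdot 1/M}_M$ gives $D^\beta u\in B^{r+m-\beta\cdot 1/M,M}_{\infty,\infty}\hookrightarrow B^{r,M}_{\infty,\infty}$ for every $\beta$ with $\beta\cdot 1/M\le m$. By Theorem \ref{COMP} and Remark \ref{REMCOMP} (in their standard extension to smooth functions of $x$ and of a vector-valued Besov argument, as already tacitly used in the proof of Theorem \ref{qlmicroreg}), each $a_\alpha$ and also $\frac{\partial F}{\partial x_j}(\cdot,(D^\beta u)_\beta)$ belongs to $B^{r,M}_{\infty,\infty}$. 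Combined with the assumption $\partial_{x_j}f\in mcl B^{r,M}_{\infty,\infty}(x_0,\xi_0)$ (and the trivial fact that $B^{r,M}_{\infty,\infty}\subset mcl B^{r,M}_{\infty,\infty}(x_0,\xi_0)$), this yields $g_j\in mcl B^{r,M}_{\infty,\infty}(x_0,\xi_0)$.

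Finally I invoke Theorem \ref{APPLICATION} for the linear equation $A_j(x,D)u_j=g_j$, with coefficients of Besov order $r>0$, and with the choice $s:=r+m$. The microlocal hypothesis on the forcing becomes $g_j\in mcl B^{s-m,M}_{\infty,\infty}(x_0,\xi_0)=mcl B^{r,M}_{\infty,\infty}(x_0,\xi_0)$, which we have verified. The a priori regularity hypothesis $u_j\in B^{s-\delta r,M}_{\infty,\infty}=B^{r+m-\delta r,M}_{\infty,\infty}$ is exactly the assumption \eqref{der}; the admissibility range $(\delta-1)r+m<s\le r+m$ is trivially fulfilled since $s=r+m$ and $\delta<1$. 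Theorem \ref{APPLICATION} then gives $u_j\in mcl B^{r+m,M}_{\infty,\infty}(x_0,\xi_0)$, which is \eqref{der_mcl}.

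The main obstacle I expect is the \emph{composition} step in the second paragraph: Theorem \ref{COMP} is stated only for a scalar $F\in C^\infty(\mathbb C)$ with $F(0)=0$ composed with a single Besov function, whereas here one needs to compose a smooth function depending on $x$ and on the full vector $(D^\beta u)_\beta$ of highest-order derivatives. This generalization is routine (reduction to the scalar case by freezing variables and paralinearization, exactly as in \cite{Ta1}), but it is the only point where one steps outside the results explicitly collected in the previous sections. Once it is granted, the remainder of the argument is a clean application of the previously established linear theory.
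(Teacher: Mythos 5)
Your proposal is correct and follows essentially the same route as the paper's own proof: differentiate \eqref{nleq} in $x_j$ to get the linearized equation, use Theorems \ref{sobolev-holder-cont} and \ref{COMP} to place the coefficients $\frac{\partial F}{\partial\zeta_\alpha}(\cdot,D^\beta u)$ and the extra forcing term in $B^{r,M}_{\infty,\infty}$, and apply Theorem \ref{APPLICATION} with $s=r+m$. Your explicit flagging of the vector-valued/$x$-dependent extension of Theorem \ref{COMP} is a point the paper passes over tacitly (both here and in Theorem \ref{qlmicroreg}), but it does not change the argument.
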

\begin{proof}
For each $j=1,\dots,n$, we differentiate \eqref{nleq} with respect to $x_j$  finding that $\partial_{x_j}u$ must solve the linearized equation
\begin{equation}\label{lineq}
\sum\limits_{\alpha\cdot 1/M\le m}\frac{\partial F}{\partial\zeta_\alpha}(x,D^\beta u)_{\beta\cdot 1/M\le m}D^\alpha\partial_{x_j}u=\partial_{x_j}f-\frac{\partial F}{\partial x_j}(x, D^\beta u)_{\beta\cdot 1/M\le m}\,.
\end{equation}
From Theorems \ref{sobolev-holder-cont} and \ref{COMP}, $u\in B^{r+m, M}_{\infty, \infty}$ yields that $\frac{\partial F}{\partial\zeta_\alpha}(\cdot ,D^\beta u)_{\beta\cdot 1/M\le m}\in B^{r, M}_{\infty, \infty}$. Because of the hypotheses \eqref{der}, \eqref{der_f}, for each $j=1,\dots, n$, Theorem \ref{APPLICATION} applies to $\partial_{x_j}u$, as a solution of the equation \eqref{lineq} (which is microlocally $M-$elliptic at $(x_0,\xi_0)$ in view of \eqref{nlell}), taking $s=r+m$. This proves the result.
\newline
\end{proof}
\begin{lem}
For every $s\in\mathbb R$, assume that $u, \partial_{x_j}u\in B^{s, M}_{\infty, \infty}$ for all $j=1,\dots,n$. Then $u\in B^{s+1/m^*, M}_{\infty, \infty}$. The same is still true if the Besov spaces $B^{s, M}_{\infty, \infty}, B^{s+1/m^*, M}_{\infty, \infty}$ are replaced by  $mcl B^{s, M}_{\infty, \infty}(x_0,\xi_0), mcl B^{s+1/m^*, M}_{\infty, \infty}(x_0, \xi_0)$ at a given point $(x_0,\xi_0)\in T^\circ\mathbb R^n$.
\end{lem}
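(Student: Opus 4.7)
The plan is to reduce the lemma to an anisotropic Bernstein-type identity expressing each dyadic block $u_h=\varphi_h(D)u$ as $\sum_{j=1}^n\mu_h^{(j)}(D)\partial_{x_j}u$, with a gain of $2^{-h/m^*}$ in $L^\infty$. Combined with $\partial_{x_j}u\in B^{s,M}_{\infty,\infty}$ this gives $\sup_h 2^{(s+1/m^*)h}\|u_h\|_{L^\infty}<\infty$; the low-frequency block $u_{-1}$ is controlled at once from $u\in B^{s,M}_{\infty,\infty}$. The key construction is a smooth, $M$-homogeneous partition of unity $\{\chi^{(j)}\}_{j=1}^n$ on $\mathbb R^n\setminus\{0\}$ (i.e.\ $\chi^{(j)}(t^{1/M}\xi)=\chi^{(j)}(\xi)$ for $t>0$ and $\sum_j\chi^{(j)}\equiv 1$), with $\textup{supp}\,\chi^{(j)}\subset\{\xi\neq 0\,:\,|\xi_j|^{m_j}\ge c|\xi|_M\}$ for some $c>0$; such a partition exists because $|\xi|_M^2=\sum_j\xi_j^{2m_j}$ forces $\max_j|\xi_j|^{m_j}\ge|\xi|_M/\sqrt n$ at every $\xi\neq 0$. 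I then set $\eta_h^{(j)}:=\chi^{(j)}\varphi_h$ and $\mu_h^{(j)}:=\eta_h^{(j)}/(i\xi_j)$, both smooth and compactly supported in $C_h^{K,M}$ since $\xi_j$ does not vanish on $\textup{supp}\,\chi^{(j)}$, and read off
$$u_h=\sum_{j=1}^n\eta_h^{(j)}(D)u=\sum_{j=1}^n\mu_h^{(j)}(D)\partial_{x_j}u,\qquad h\ge 0.$$

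The $M$-homogeneity of $\chi^{(j)}$ together with the scaling $\varphi_h(\xi)=\varphi_0(2^{-h/M}\xi)$ yields $\mu_h^{(j)}(\xi)=2^{-h/m_j}\mu_0^{(j)}(2^{-h/M}\xi)$, and a change of variables in the inverse Fourier transform (as in the proof of Proposition \ref{QBI}) gives $\|\mathcal F^{-1}\mu_h^{(j)}\|_{L^1}\le C\,2^{-h/m_j}$ uniformly in $h$. Since $\mu_h^{(j)}$ is frequency-localized on $C_h^{K,M}$ which meets only $O(1)$ neighbouring shells, Young's inequality applied to the thickened block $\tilde\varphi_h(D)\partial_{x_j}u=\sum_{|k-h|\le N_0}(\partial_{x_j}u)_k$ gives
$$\|u_h\|_{L^\infty}\le C\sum_j 2^{-h/m_j}\sum_{|k-h|\le N_0}\|(\partial_{x_j}u)_k\|_{L^\infty}\le C'\,2^{-h/m^*}\sum_j\sum_{|k-h|\le N_0}\|(\partial_{x_j}u)_k\|_{L^\infty},$$
after using $1/m_j\ge 1/m^*$. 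Multiplying by $2^{(s+1/m^*)h}$, absorbing the bounded shift from $k$ to $h$, and taking the supremum in $h$ produces $\|u\|_{B^{s+1/m^*,M}_{\infty,\infty}}\le C\bigl(\|u\|_{B^{s,M}_{\infty,\infty}}+\sum_j\|\partial_{x_j}u\|_{B^{s,M}_{\infty,\infty}}\bigr)$, which proves the global statement.

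For the microlocal assertion I apply the same argument to $v:=\psi(D)(\phi u)\in B^{s,M}_{\infty,\infty}$, where $(\phi,\psi)$ are the cutoffs of Definition \ref{microsobolev} at $(x_0,\xi_0)$; the real task is to check $\partial_{x_j}v\in B^{s,M}_{\infty,\infty}$. Writing
$$\partial_{x_j}v=\psi(D)(\phi\,\partial_{x_j}u)+\psi(D)((\partial_{x_j}\phi)u),$$
I insert a second pair of cutoffs $\phi_1\in C^\infty_0$, $\psi_1\in S^0_M$ equal to one on $\textup{supp}\,\phi$ and on $\textup{supp}\,\psi$ respectively, chosen tight enough that $\psi_1(D)(\phi_1 u)$ and $\psi_1(D)(\phi_1\partial_{x_j}u)$ both lie in $B^{s,M}_{\infty,\infty}$ by the microlocal hypotheses. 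Using $\phi=\phi\phi_1$, the first summand becomes the image of $\psi_1(D)(\phi_1\partial_{x_j}u)$ through the operator $\psi(D)\phi\in\textup{Op}\,S^0_{M,\delta}$, plus an error $\psi(D)\phi(I-\psi_1(D))(\phi_1\partial_{x_j}u)$ whose composite symbol vanishes microlocally at $(x_0,\xi_0)$ and is therefore globally regularizing by the calculus in \S\ref{qomsymbols}--\ref{mcl}; the second summand is handled analogously, using that $\partial_{x_j}\phi$ vanishes near $x_0$ after inserting a further cutoff $\phi_2\equiv 1$ on $\textup{supp}(\partial_{x_j}\phi)$. Hence $\partial_{x_j}v\in B^{s,M}_{\infty,\infty}$ and the global step applied to $v$ gives $u\in mcl B^{s+1/m^*,M}_{\infty,\infty}(x_0,\xi_0)$. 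The main obstacle I expect is precisely this microlocal bookkeeping: verifying that every remainder arising from commuting $\partial_{x_j}$ past the various cutoffs either matches the microlocal data on $u$ and $\partial_{x_j}u$ or is absorbed by a global Besov estimate via pseudolocality; the analytic core, by contrast, is the transparent anisotropic Bernstein identity $u_h=\sum_j\mu_h^{(j)}(D)\partial_{x_j}u$ combined with $\|\mathcal F^{-1}\mu_h^{(j)}\|_{L^1}\lesssim 2^{-h/m_j}$, where the final gain $1/m^*$ appears precisely at the worst index $j^*$ with $m_{j^*}=m^*$.
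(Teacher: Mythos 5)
Your proof is correct, but it follows a genuinely different route from the paper's. The paper avoids any angular partition of unity: starting from $\langle\xi\rangle_M^2=1+\sum_j\xi_j^{2m_j}$ it writes $\langle D\rangle_M^{1/m^*}u=\langle D\rangle_M^{1/m^*-2}u+\sum_j\Lambda_{j,M}(D)(D_{x_j}u)$, where $\Lambda_{j,M}(D)$ has the globally smooth symbol $\langle\xi\rangle_M^{1/m^*-2}\xi_j^{2m_j-1}\in S^{1/m^*-1/m_j}_M$ of nonpositive order; both the global and the microlocal conclusions then fall out at once from Corollary \ref{smooth-cont} and Theorem \ref{microsobolevaction} (together with the $M$-ellipticity of $\langle\xi\rangle_M^{1/m^*}$ via Theorem \ref{singularities}), with no commutator bookkeeping whatsoever. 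Your argument instead establishes the anisotropic Bernstein inequality $\Vert u_h\Vert_{L^\infty}\le C\,2^{-h/m^*}\sum_j\Vert\tilde\varphi_h(D)\partial_{x_j}u\Vert_{L^\infty}$ by hand, through the $M$-homogeneous conic partition $\{\chi^{(j)}\}$ and the scaling identity $\Vert\mathcal F^{-1}\mu_h^{(j)}\Vert_{L^1}\le C\,2^{-h/m_j}$; this is sound (the support condition $|\xi_j|^{m_j}\ge c|\xi|_M$ on $C^{K,M}_h$ indeed makes $\mu_h^{(j)}$ smooth, and the low block $u_{-1}$ is correctly handled separately) and is more elementary and self-contained for the global statement, since it bypasses the mapping theorems for ${\rm Op}\,S^m_M$ entirely. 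The price is paid in the microlocal half: you must fix a common admissible pair of cutoffs for $u$ and all $\partial_{x_j}u$, and dispose of the remainders $\psi(D)((\partial_{x_j}\phi)u)$ and $\psi(D)\phi(I-\psi_1(D))(\phi_1\partial_{x_j}u)$ by pseudolocality and the symbolic calculus of \S 3--4; this is standard and can be completed along the lines you sketch, but it remains a sketch in your write-up, whereas in the paper the microlocal case is verbatim the same two lines as the global one.
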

\begin{proof}
Let us argue for simplicity in the case of the spaces $B^{s, M}_{\infty, \infty}$, the microlocal case being completely analogous.
\newline
In view of Theorem \ref{sobolev-holder-cont}, that $u$ belongs to $B^{s+1/m^*, M}_{\infty, \infty}$ is completely equivalent to show that $\langle D\rangle_M^{1/m^*}u\in B^{s, M}_{\infty, \infty}$. By the use of the known properties of the Fourier transform, we may rewrite $\langle D\rangle_M^{1/m^*}u$ in the form
\begin{equation*}
\langle D\rangle_M^{1/m^*}u=\langle D\rangle_M^{1/m^*-2}u+\sum\limits_{j=1}^n\Lambda_{j, M}(D)(D_{x_j}u)\,,
\end{equation*}
where $\Lambda_{j,M}(D)$ is the pseudodifferential operator with symbol $\langle\xi\rangle_M^{1/m^*-2}\xi_j^{2m_j-1}$, that is
\begin{equation*}
\Lambda_{j,M}(D)v:=\mathcal F^{-1}\left(\langle\xi\rangle_M^{1/m^*-2}\xi_j^{2m_j-1}\widehat v\right)\,,\quad j=1,\dots,n\,.
\end{equation*}
Since $\langle\xi\rangle_M^{1/m^*-2}\xi_j^{2m_j-1}\in S^{1/m^*-1/m_j}_M$, the result follows at once from Corollary \ref{smooth-cont}.
\end{proof}
\noindent
As a straightforward application of the previous lemma, the following consequence of Theorem \ref{nlmicroreg} can be proved.
\begin{cor}
Under the same assumptions of Theorem \ref{nlmicroreg} we have that $u\in mcl B^{r+m+\frac1{m^*}, M}_{\infty, \infty}(x_0,\xi_0)$.
\end{cor}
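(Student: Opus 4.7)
The plan is to combine the conclusion of Theorem \ref{nlmicroreg} with the microlocal version of the preceding lemma, applied at the regularity level $s=r+m$. Once both $u$ and its first order partial derivatives are known to sit in $mcl B^{r+m, M}_{\infty,\infty}(x_0,\xi_0)$, the lemma yields the gain of $1/m^*$ derivatives and produces exactly the desired microlocal regularity.

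Concretely, first I would record that the hypotheses of Theorem \ref{nlmicroreg} contain the global membership $u\in B^{r+m,M}_{\infty,\infty}$. Since a globally Besov distribution is microlocally Besov (of the same order) at every covector, this gives for free $u\in mcl B^{r+m,M}_{\infty,\infty}(x_0,\xi_0)$. Next I would invoke the conclusion \eqref{der_mcl} of Theorem \ref{nlmicroreg}, which asserts $\partial_{x_j}u\in mcl B^{r+m,M}_{\infty,\infty}(x_0,\xi_0)$ for every $j=1,\dots,n$. At this point both $u$ and all its first partial derivatives are in the same microlocal Besov class at $(x_0,\xi_0)$.

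Finally, I would apply the microlocal case of the lemma preceding the corollary with $s:=r+m$: the hypothesis of that lemma is precisely what has just been assembled, and its conclusion is
\[
u\in mcl B^{r+m+\frac{1}{m^*},M}_{\infty,\infty}(x_0,\xi_0),
\]
which is the claim. There is really no obstacle here: the whole work has been done by Theorem \ref{nlmicroreg} (to lift each $\partial_{x_j}u$ into the correct microlocal space) and by the lemma (to reassemble the gain in the undifferentiated $u$); the corollary is simply their composition. The only small point to check is that the ambient a priori regularity $u\in B^{r+m,M}_{\infty,\infty}$ indeed places $u$ microlocally in the same Besov space at the chosen point, which is immediate from Definition \ref{microsobolev} since $\psi(D)(\phi u)\in B^{r+m,M}_{\infty,\infty}$ whenever $u\in B^{r+m,M}_{\infty,\infty}$, by Proposition \ref{multiplication} and the boundedness statement of Corollary \ref{smooth-cont}.
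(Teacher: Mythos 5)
Your proof is correct and is exactly the argument the paper intends: the paper itself presents the corollary as "a straightforward application of the previous lemma" to the conclusion \eqref{der_mcl} of Theorem \ref{nlmicroreg}, i.e., one feeds $u\in B^{r+m,M}_{\infty,\infty}$ (hence $u\in mcl\, B^{r+m,M}_{\infty,\infty}(x_0,\xi_0)$) and $\partial_{x_j}u\in mcl\, B^{r+m,M}_{\infty,\infty}(x_0,\xi_0)$ into the microlocal version of the lemma with $s=r+m$. Your additional check that global Besov membership implies the microlocal one at $(x_0,\xi_0)$ is a correct and appropriate detail to record.
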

\begin{rem}
We notice that if $r\delta\ge 1$ then every function $u\in B^{r+m, M}_{\infty, \infty}$ automatically satisfies the condition \eqref{der}; indeed one can compute $\partial_{x_j}u\in B^{r+m-1/m_j, M}_{\infty, \infty}\subset B^{r+m-r\delta, M}_{\infty, \infty}$ being $1/m_j\le 1\le r\delta$ for each $j=1,\dots,n$. As already observed before, for $r>m^*$ we can always find $\delta^*\in]0,1/m^*[$ such that $r\delta^*\ge 1$ (it suffices to choose an arbitrary $\delta^*\in[1/r, 1/m^*[$); hence, applying Theorem \ref{nlmicroreg} with such a $\delta^*$ we conclude that if $r>m^*$ and the right-hand side $f$ of the equation \eqref{nleq} obeys the condition \eqref{der_f} at a point $(x_0,\xi_0)\in T^{\circ}\mathbb R^n$ then every solution $u\in B^{r+m, M}_{\infty, \infty}$ to such an equation satisfies the condition \eqref{der_mcl}; in particular $u\in mcl B^{r+m+1/m^*, M}_{\infty, \infty}(x_0,\xi_0)$.
\end{rem}

\end{document}